\newcommand{\ZZ}{\mathbb{Z}}
\newcommand{\PP}{\mathbb{P}}
\newcommand{\QQ}{\mathbb{Q}}
\newcommand{\smtx}[4]{\left(\begin{smallmatrix}#1&#2\\#3&#4\end{smallmatrix}\right)}
\renewcommand{\div}{\operatorname{div}}
\DeclareMathOperator{\Div}{Div}
\DeclareMathOperator{\length}{length}
\DeclareMathOperator{\PGL}{PGL}
\newcommand{\cF}{\mathcal{F}}
\newcommand{\cO}{\mathcal{O}}
\newcommand{\cR}{\mathcal{R}}
\newcommand{\eps}{\varepsilon}
\newcommand{\ab}{{\text{ab}}}
\newtheorem{theorem}{Theorem}[section]
\newtheorem{proposition}[theorem]{Proposition}
\newtheorem{lemma}[theorem]{Lemma}
\newtheorem{corollary}[theorem]{Corollary}
\theoremstyle{definition}
\newtheorem{definition}[theorem]{Definition}
\theoremstyle{remark}
\newtheorem{remark}[theorem]{Remark}
\numberwithin{equation}{section}
\renewcommand{\setminus}{\smallsetminus}
\begin{document}

\title{Efficient computation of non-archimedean Theta functions}

\author{Marc Masdeu}
	\address{Departament de Matem\`atiques\\Universitat Aut\`onoma de
		Barcelona\\08193 Bellaterra, Barcelona, Catalonia}
\email{marc.masdeu@uab.cat}
\thanks{The first named author thanks Steffen Müller and Enis Kaya for very fruitful discussions at the
beginning of this project.}

\author{Xavier Xarles}
	\address{Departament de Matem\`atiques\\Universitat Aut\`onoma de
		Barcelona\\08193 Bellaterra, Barcelona, Catalonia}
\email{xavier.xarles@uab.cat}
\thanks{Both authors were supported by research project  PID2020-116542GB-I00}

\subjclass[2010]{Primary 14Q99, 14H42, 14K25, 11Y99}

\date{}

\dedicatory{}

\begin{abstract}
  We describe an efficient iterative algorithm for the computation of theta functions of
  non-archimedean Schottky groups and, more generally, of (non-archimedean) discontinuous groups.
\end{abstract}

\maketitle

\section{Introduction}
Let $K$ be a field complete with respect to a non-trivial non-archimedean absolute value.
We will denote by $|\ |$ the absolute value, and by $R = \{a\in K\mid |a|\le 1\}$
its ring of integers.

Let $\Gamma\subset \PGL_2(K)$ be a (non-archimedean) Schottky group,
with fixed generators $\gamma_1,\ldots,\gamma_g$ ($g > 1$).
We will write $\gamma_{-i}$ for $\gamma_i^{-1}$.
Denote by $\Sigma\subset \PP^1(K)$ the ``bad'' points for $\Gamma$
(the closure of the set of limit points), and set $\Omega=\PP^1(K)\setminus \Sigma$. The group
$\Gamma$ acts discontinuously on $\Omega$, and the quotient $\Omega / \Gamma$ has the
structure of a rigid-analytic space, which is the analytification of a curve $C$ over $K$
of genus $g$, known as a \emph{Mumford curve} (see \cite{GvdP} for more details).

One of the main tools to study such curves $C$ is the theory of theta functions. One way of
thinking about the theta function attached to $\Gamma$ is as an analytic function $\Theta_\Gamma(z;a,b)$
which depends on parameters $a,b\in \Omega$, and which is defined as
\[
\Theta_\Gamma(z;a,b) = \prod_{\gamma\in\Gamma}\frac{z-\gamma a}{z-\gamma b}.
\]
Using this function and its derivative one
can compute the canonical embedding of $C$, or the period matrices of their Jacobian, and in~\cite{MR}
it is shown how this is done in practice.

The obvious algorithm to approximate the value of $\Theta_\Gamma(z;a,b)$, which has been used
in the literature, consists on
computing a finite product over all matrices $\gamma\in \Sigma$, where $\Sigma$
is the set of all elements in $\Gamma$ of length up to some bound which depends linearly
on the required precision.
However, a direct count shows that there are exactly
$\frac{g(2g-1)^n -1}{g-1}$ words of length up to $n$, which implies that any
algorithm that needs to enumerate these words will be of exponential running time.

When the Schottky group arises from the units of an order in a quaternion algebra, several authors
have devised polynomial-time algorithms, called ``overconvergent'', to compute $\Theta_\Gamma(z;a,b)$ by describing the elements
of the group iteratively (\cite{negrini-masters}) or using the action of a
Hecke operator (\cite{darmon-vonk}, \cite{guitart-masdeu-xarles}).

In this note, we propose a polynomial-time algorithm that finds an analytic function  on $z$ approximating
$\Theta_\Gamma(z;a,b)$. Once this function is obtained,
evaluating it at any argument $z\in\Omega$ can be done in time linear in the precision. In the quaternionic
setting, we expect that
the efficiency of our algorithm is comparable to the existing ``overconvergent'' methods.

The basic idea consists in decomposing $\Theta_\Gamma(z;a,b)$ as a product
\[
\Theta_\Gamma(z;a,b) = \prod_{n=0}^\infty \Theta_{n}(z;a,b),
\]
with $\Theta_n(z;a,b)$ concerning only elements of length $n$. If one further decomposes $\Theta_n(z;a,b)$ as a product
\[
\Theta_n(z;a,b) = \prod_{i=\pm 1}^{\pm g}\Theta_n^{(i)}(z;a,b)
\]
where in $\Theta_n^{(i)}$ only words starting with $\gamma_i$ are considered, then one can describe $\Theta^{(i)}_{n+1}(z;a,b)$
in terms of all the $\Theta^{(j)}_n(z;a,b)$ by a simple recurrence. These basic functions $\Theta^{(i)}_n(z;a,b)$ turn out
to be elements of an affinoid algebra and are amenable to computation, thanks to the existence of the so-called
good fundamental domains for the Schottky group $\Gamma$. In~\cite[Section 4]{MR} the authors describe an algorithm that
computes a good fundamental domain from only the data of a generating set for the group $\Gamma$, so this can be
thought of as already computed data. It is worth pointing out, however, that depending on how far are the given
generators from being in good position, one may have to enumerate many words just to construct the good fundamental domain.
The naive algorithm would need correspondingly more iterations as well, so we will not take this problem into account and
will assume the good fundamental domain has already been computed.

The paper is organized as follows. In Section~\ref{sec:basics} we review some basic facts on Schottky groups
and their fundamental domains. Section~\ref{sec:theta} introduces the Theta function in a way more suitable to
work with. We give precise estimates for its convergence, which are crucial for the algorithm to produce provably
correct results. In Section~\ref{sec:algorithm} we describe the iterative algorithm and prove all the results
needed to show correctness. We have implemented the algorithm in Sage (\cite{sagemath}) and we end
with Section~\ref{sec:examples} where we illustrate the algorithm in some example cases, for which we compare the
running time with a naive implementation.

\section{Basics on Schottky Groups}
\label{sec:basics}
\begin{definition}
  A matrix $\gamma\in \PGL_2(K)$ is hyperbolic if it has two eigenvalues
  with distinct absolute value.
\end{definition}

Recall that a subgroup $G \leq \PGL_2(K)$ is \emph{discontinuous} if its set of limit points is not all of
$\PP^1(\mathbb{C}_K)$
and the orbits any point in $\PP^1(\mathbb{C}_K)$ has a compact closure.

\begin{proposition}
  Let $\Gamma\leq \PGL_2(K)$ be a subgroup. Then the following are equivalent:
  \begin{enumerate}
    \item $\Gamma$ is finitely generated, and every
    non-identity $\gamma\in \Gamma$ is hyperbolic.
    \item $\Gamma$ is finitely generated, discontinuous and every non-identity $\gamma \in \Gamma$ has infinite order.
    \item $\Gamma$ is free, discrete and finitely generated.
  \end{enumerate}
\end{proposition}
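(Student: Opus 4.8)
The plan is to translate all three conditions into statements about the action of $\Gamma$ on the Bruhat--Tits tree $\mathcal{T}$ of $\PGL_2(K)$. The central dictionary I would establish first is: a matrix $\gamma\in\PGL_2(K)$ is hyperbolic in the sense of the definition if and only if it acts on $\mathcal{T}$ \emph{without} a fixed point, i.e.\ as a translation of positive amplitude along an axis; equivalently, $\gamma$ is non-hyperbolic iff it fixes a vertex of $\mathcal{T}$, iff it lies in a conjugate of the maximal compact subgroup $\PGL_2(R)$. This reduces to eigenvalues: two eigenvalues of distinct absolute value correspond to a nonzero gap between their valuations, which is precisely the translation length. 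The second ingredient is the elementary fact that a discrete subgroup of $\PGL_2(K)$ meets every compact subgroup in a finite set. Combining the two yields the key slogan: \emph{if $\Gamma$ is discrete and $\gamma\in\Gamma$ has infinite order, then $\gamma$ is hyperbolic}, since otherwise $\langle\gamma\rangle$ would fix a vertex, hence lie in a compact subgroup, hence be finite.

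With these in hand, the implication (2) $\Rightarrow$ (1) is nearly immediate: a discontinuous group is in particular discrete, finite generation is hypothesized, and the slogan above promotes ``infinite order'' to ``hyperbolic'' for every non-identity element.

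For (1) $\Rightarrow$ (3), hyperbolicity of every non-identity element says exactly that $\Gamma$ acts on $\mathcal{T}$ without fixed points, hence freely once a barycentric subdivision is performed to rule out edge inversions; by the Bass--Serre/Nielsen theorem a group acting freely on a tree is free, and finite generation is inherited. The only remaining point is discreteness, which I would obtain by contradiction: a non-discrete subgroup contains a sequence of distinct non-identity elements converging to the identity; choosing matrix representatives near the identity forces their characteristic polynomials near $(x-1)^2$, so their eigenvalues converge to $1$ and eventually have equal absolute value, contradicting that every non-identity element is hyperbolic.

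The remaining implication (3) $\Rightarrow$ (2) is where I expect the main obstacle. Freeness forces every non-identity element to have infinite order, so the genuine content is upgrading ``discrete'' to ``discontinuous'': one must show that the limit set is not all of $\PP^1(\mathbb{C}_K)$ and that every orbit has compact closure. This requires producing a good fundamental domain. Following \cite{GvdP}, the strategy is to exploit properness of the action of the free, discrete, finitely generated group $\Gamma$ on $\mathcal{T}$ (finite vertex stabilizers) to set up a ping-pong decomposition of $\PP^1$ into half-spaces attached to the ends fixed by the generators, from which the limit set emerges as a closed, nowhere-dense Cantor-type set that absorbs all orbit accumulation. I would single out the construction of these half-spaces --- equivalently, the fundamental domain of \cite[Section 4]{MR} --- as the technical heart; the subtlety is that $\mathcal{T}$ fails to be locally finite when the residue field is infinite, so one must argue with the ends of the tree and the reduction map to $\PP^1$ rather than with naive local finiteness.
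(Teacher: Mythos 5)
The paper offers no argument of its own here---the proposition is quoted verbatim from \cite[Section I.1]{GvdP}---so your tree-theoretic sketch is necessarily a different route; it is the standard one, but as written it has concrete gaps, all traceable to the generality of $K$, which in this paper is only assumed complete and non-archimedean (neither locally compact nor discretely valued). The most serious is your ``second ingredient'': the claim that a vertex stabilizer, i.e.\ a conjugate of $\PGL_2(R)$, is compact. This holds for local fields but fails in general, and with it fails the slogan ``discrete $+$ infinite order $\Rightarrow$ hyperbolic'': over $K=\mathbb{C}((t))$ the element $\mathrm{diag}(1,2)$ generates an infinite cyclic, topologically discrete subgroup (no sequence of powers approaches the identity, since $|2^k-1|=1$ for all $k\neq 0$) consisting entirely of non-hyperbolic elements. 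The implication $(2)\Rightarrow(1)$ is still true, but it must be extracted from the actual definition of discontinuity---compactness of orbit closures in $\PP^1$ together with the limit set being proper---rather than from discreteness alone: for an elliptic $\gamma$ of infinite order, normalize to $z\mapsto uz$ with $|u|=1$ (or to $z\mapsto z+1$ in the unipotent case); compactness of an orbit closure forces a convergent subsequence $u^{n_k}$, hence $u^{n_{k+1}-n_k}\to 1$, hence nontrivial group elements tending to the identity, which makes every point a limit point and contradicts discontinuity.

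Two further points. For $(1)\Rightarrow(3)$, your appeal to Bass--Serre needs a simplicial tree, i.e.\ a discrete valuation; when $|K^\times|$ is dense the Bruhat--Tits object is an $\mathbb{R}$-tree, where finitely generated groups acting freely need not be free (Rips: they are free products of surface groups and free abelian groups, and $\mathbb{Z}^2$ does act freely on $\mathbb{R}$ by translations of rationally independent lengths---realized in $\PGL_2(K)$ by two diagonal matrices whose determinant ratios have rationally independent valuations, whenever the value group permits it). So this step needs either an additional hypothesis on $K$ or a separate argument, and the equivalence itself is delicate in that generality. Finally, $(3)\Rightarrow(2)$---which you correctly single out as the technical heart---is described but not proved: producing the half-spaces, equivalently a good fundamental domain, from a free discrete finitely generated group is precisely the content of the result being cited, so for that implication your sketch does not yet stand on its own.
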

\begin{proof}
  See \cite[Section I.1]{GvdP}.
\end{proof}

\subsection{\texorpdfstring{Balls in $\PP^1$ under the action of $\PGL_2(K)$}
  {Balls in P1 under the action of PGL2(K)}}

Consider the set
\[
  B(a,\rho) =\{z\in K \mid |z-a| < \rho\},
\]
for $a\in K$ and $\rho\in |\overline{K}^\times|$. This will be a called a (proper) ball
in $\PP^1_K(K)$. We will denote also by $B(a,\rho)$ the corresponding affinoid
in $\PP^1_K$, whose $K$-rational points are the elements of the ball.
We will denote by
\[
  B(a,\rho^+)=\{z\in K \mid |z-a|\leq \rho\},
\]
and
\[
  \partial B(a,\rho)=\{z\in K \mid |z-a|=\rho\}.
\]

It is useful to have a formula for the image of a ball under a Möbius transformation.

\begin{lemma}
  \label{lemma:action-balls}
  Let $g\in \PGL_2(K)$, and let $B$ be a proper ball (either open, or closed) of radius $r$
  and let $P$ be a point in $B$. Then:
  \[
    g B = \begin{cases}
      B\left( g(P), |g'(P)|r\right) & \text{if } g^{-1}(\infty) \notin B \\
      \PP^1\setminus B\left( g(\infty), \frac{|g'(\infty)|}{r}\right) & \text{else.}
    \end{cases}
  \]
  Here $g'(P) = \frac{ad-bc}{(cP+d)^2}$ if $g=\smtx abcd$,
  and $g'(\infty) = \frac{ad-bc}{c^2}$. Also, one has to take the open or closed
  form of the resulting ball as appropriate.
\end{lemma}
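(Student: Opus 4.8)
The plan is to reduce everything to the elementary identity
\[
  g(z_1) - g(z_2) = \frac{(ad-bc)(z_1 - z_2)}{(cz_1 + d)(cz_2 + d)}, \qquad g = \smtx abcd,
\]
valid for $z_1, z_2 \in K$, and then to exploit the ultrametric fact that every point of a ball is a center, so that $B = B(P, r)$ and the quantity $|z - q|$ is constant on $B$ whenever the pole $q = g^{-1}(\infty) = -d/c$ lies outside $B$. First I would dispose of the affine case $c = 0$, where $g^{-1}(\infty) = \infty$ is never in a proper ball and the result is immediate from $|g(z_1) - g(z_2)| = |a/d|\,|z_1 - z_2|$. So assume $c \neq 0$ and write $q = -d/c$.

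In the first (nonsingular) case, $q \notin B$. Since $q \notin B(P,r)$ while every $z \in B$ satisfies $|z - P| < r$, the strong triangle inequality gives $|z - q| = |P - q|$, hence $|cz + d| = |c|\,|P - q| = |cP + d|$ is constant on $B$. Plugging this into the displayed identity yields
\[
  |g(z_1) - g(z_2)| = \frac{|ad - bc|}{|cP + d|^2}\,|z_1 - z_2| = |g'(P)|\,|z_1 - z_2|,
\]
so $g$ acts on $B$ as a similarity of ratio $|g'(P)|$. Taking $z_2 = P$ shows $g(B) \subseteq B(g(P), |g'(P)| r)$, with the open/closed type preserved. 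The reverse inclusion follows by applying the same reasoning to $g^{-1}$, whose pole $g(\infty) = a/c$ lies outside the target ball (since $\infty \notin B$), together with the chain rule $|(g^{-1})'(g(P))| = |g'(P)|^{-1}$; this gives the first branch.

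In the second (polar) case, $q \in B$. Now $\infty = g(q) \in g(B)$, so $g(B)$ cannot be a proper ball, and I would instead locate it by measuring distances to $g(\infty) = a/c$. A direct simplification gives
\[
  g(z) - g(\infty) = \frac{bc - ad}{c(cz + d)}, \qquad\text{hence}\qquad |g(z) - g(\infty)| = \frac{|ad - bc|}{|c|^2\,|z - q|} = \frac{|g'(\infty)|}{|z - q|}.
\]
As $z$ ranges over $B = B(q, r)$, the value $|z - q|$ ranges over all elements of $|K^\times|$ strictly below $r$ (and $0$, giving $z = q \mapsto \infty$), so $|g(z) - g(\infty)|$ takes exactly the values strictly above $|g'(\infty)|/r$. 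Thus $g(B) = \PP^1 \setminus B(g(\infty), |g'(\infty)|/r)$, where the complementary ball comes out closed precisely when $B$ is open, matching the stated open/closed convention.

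The only genuinely delicate points are these ultrametric bookkeeping issues: the constancy of $|cz + d|$ on a ball avoiding the pole, and the careful tracking of the open-versus-closed type through the inversion in the polar case. The surjectivity in the first case — that $g(B)$ is all of the target ball rather than a proper subset — is the one spot where one must argue rather than merely estimate, and passing to $g^{-1}$ is the cleanest way to close that gap.
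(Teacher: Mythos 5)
Your proposal follows essentially the same route as the paper's proof: the same case split on whether the pole $g^{-1}(\infty)=-d/c$ lies in $B$, the ultrametric constancy of $|cz+d|$ on $B$ to show $g$ acts as a similarity of ratio $|g'(P)|$, the reverse inclusion via the same argument applied to $g^{-1}$, and the direct computation with $g(z)-a/c=\frac{bc-ad}{c(cz+d)}$ in the polar case. The one place you are too quick is the parenthetical ``since $\infty\notin B$'' as justification that $g(\infty)=a/c$ avoids the target ball $B(g(P),|g'(P)|r)$: as stated this is circular (it would follow from $g(B)$ \emph{equaling} the target ball, which is what you are proving), and the paper instead verifies it by the explicit estimate $|g(P)-g(\infty)|=|g'(P)|\,|cP+d|/|c|\geq |g'(P)|r$, which is the one-line fix you should insert.
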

\begin{proof} 
First, observe that the condition $g^{-1}(\infty) \notin B$ is equivalent to  $|cP + d| > r|c|$.
In this case, we have that for any  $Q\in B$, $|g(P)-g(Q)|=|g'(P)||P-Q|$, since $|cP + d| = |cQ + d|$.
This shows that $g(Q)\in B\left( g(P), |g'(P)|r\right)=:B'$, hence $ g B \subset B'$.
We apply the same argument for $g^{-1}$, the ball $B'$ and the point $g(P)$, as $g(\infty)=a/c\notin B'$,
since 
$$ |g(P)-g(\infty)|=\left|\frac{aP+b}{cP+d}-\frac{a}{d}\right|=|g'(P)| \frac{|cP + d| }{|c|}>|g'(P)| r.$$
We get that, for any $Q'\in B'$,
$$|P-g^{-1}(Q')|=|(g^{-1})'(g(P))||g(P)-Q'|\le |(g^{-1})'(g(P))||g'(P)| r=r,$$ 
which shows that $ B \supset g^{-1}(B')$.

For the second case (that of $g^{-1}(\infty) \in B$), observe that $B = B(-d/c, r)$. Write now
\[
\frac{az+b}{cz+d}-\frac a c = \frac{1}{c}\frac{bc-ad}{cz+d},
\]
and the result is obtained by taking absolute values, since $z\in B\iff |cz+d| \leq |c|r$ (or $<$,
depending on whether we are considering an open or closed ball.
\end{proof}

In view of the previous lemma, sets of the form $B(a,r)$ or $\PP^1 \setminus B(a,r^+)$
will be called \emph{open balls}, and we will qualify them of \emph{proper} if we
want to restrict to sets of the first type. Similarly, the sets $B(a,r^+)$
and $\PP^1\setminus B(a,r)$ will be called \emph{closed balls}.
Therefore, $\PGL_2(K)$ acts on the set of open (respectively closed) balls.

Given an open ball $B$, we will denote by $B^+$ the corresponding closed ball,
given by $B^+=B(a,r^+)$ if $B=B(a,r)$ and $B^+=\PP^1 \setminus B(a,r)$ if $B=\PP^1 \setminus B(a,r^+)$.

\begin{remark}
    If $g\in \PGL_2(K)$ and $g^{-1}(\infty) \notin \partial B$,
    then $g(\partial B) =\partial B(gP, |g'(P)|r)$.
    But if $g^{-1}(\infty) \in \partial B$, then
\[
  g(\partial B) =
  \PP^1\setminus \left(B\left( gP, |g'(P)|r\right) \cup B\left( g(\infty), \frac{|g'(\infty)|}{r}\right)  \right).
\]  
  Note that, in this last case,  $B(gP, |g'(P)|r)$ and $B(g\infty,\frac{|g'(\infty)|}{r})$
  are disjoint open balls with the same associated closed ball. 
\end{remark}

\subsection{Fundamental domains}

Let $\Gamma\subset \PGL_2(K)$ be a Schottky group,
with fixed generators $\gamma_1,\ldots,\gamma_g$.
We will write $\gamma_{-i}$ for $\gamma_i^{-1}$.
Denote by $\Sigma\subset \PP^1(K)$ the closure of the set of limit points for $\Gamma$, and set $\Omega=\PP^1(K)\setminus \Sigma$.

\begin{definition}
    A good fundamental domain for $\Gamma$ is a set of generators $\gamma_1,\ldots,\gamma_g$ and a connected affinoid set
    \[
      \cF^+=\PP^1 \setminus \bigcup_{i=\pm 1}^{\pm g} B_i,
    \]
    where the $B_i$ are open balls for $i=-g,\dots,1,1,\dots,g$ (we will abbreviate it as $i=\pm 1,\dots,\pm g$ from now on) such that:
    \begin{enumerate}
      \item The closed balls $B_i^+$ are mutually disjoint, and
      \item For all $i=\pm 1,\dots \pm g$ we have $\gamma_i(\PP^1\setminus B_{-i})=B_{i}^+$ for all $i$.
    \end{enumerate}
\end{definition}

As explained in \cite[Section 2]{MR}, there need not exist a good fundamental domain attached to a fixed set of 
generators. When a set of generators appears in a fundamental domain they are said to be in \emph{good position}.
Write $\cF=\PP^1\setminus\bigcup_{i=\pm 1}^{\pm g} B_i^+$. Gerritzen and van der Put proved the following:

\begin{theorem}[Gerritzen--Van der Put]
  Let $\Gamma\subset \PGL_2(K)$ be a Schottky group. Then there exists a good fundamental domain. Moreover, with the notations
  above, if $\cF^+$ is one such fundamental domain, then:
  \begin{itemize}
    \item If $\gamma\in\Gamma$ is not the identity, then $\gamma \cF^o \cap \cF = \emptyset$.
    \item If $\gamma \cF^+ \cap \cF^+ \neq \emptyset$, then $\gamma$ is either the identity or $\gamma=\gamma_i$ for some $i$.
    \item $\bigcup_{\gamma\in\Gamma} \gamma \cF^+ = \Omega$.
  \end{itemize}

  Conversely, let $B(P_i, \rho_i)$ (for $i=\pm 1,\dots,\pm g$) be a collection of open \emph{proper} balls with centers $P_i$ in $K$ and such that
  \begin{itemize}
    \item $\rho_i\rho_{-i}\in |K^\times| \forall i=1,\dots, g$, and
    \item The closed balls $B(P_i, \rho_i^+)$ are pairwise disjoint.
  \end{itemize}
  Then there exists $\gamma_1,\ldots, \gamma_g\in \PGL_2(K)$ generating a Schottky group and in good position with respect to
  \[
  \cF^+ = \PP^1 \setminus \bigcup_{i=\pm 1}^{\pm g} B(P_i, \rho_i).
  \]
\end{theorem}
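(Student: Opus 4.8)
The statement naturally splits into two logically independent halves, which I would handle separately. The forward direction---that a Schottky group admits a good fundamental domain, together with the three bulleted properties of $\cF^+$---is the deep classical result, and here I would not attempt to reprove it but follow Gerritzen--van der Put: one lets $\Gamma$ act on the Bruhat--Tits tree of $\PGL_2(K)$, uses that a finitely generated free discontinuous group acts freely with a fundamental subtree, and translates the combinatorics of that subtree back to $\PP^1$ to produce balls $B_i$ satisfying $\gamma_i(\PP^1\setminus B_{-i})=B_i^+$. This existence statement is where I expect the genuine obstacle to lie, so I would cite \cite{GvdP} for it; granting the relations, the three properties of $\cF^+$ then fall out of the same ping-pong bookkeeping used below.

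For the converse I would argue constructively. Given the balls $B_i=B(P_i,\rho_i)$, I would write down the candidate generators explicitly by
\[
\gamma_i(z) = P_i + \frac{c_i}{z-P_{-i}}, \qquad i=1,\dots,g,
\]
where $c_i\in K^\times$ is chosen with $|c_i|=\rho_i\rho_{-i}$; such a $c_i$ exists in $K$ exactly because of the hypothesis $\rho_i\rho_{-i}\in|K^\times|$, and since $P_i,P_{-i},c_i\in K$ the map lies in $\PGL_2(K)$. Setting $\gamma_{-i}=\gamma_i^{-1}$, a direct computation of absolute values shows $\gamma_i$ sends $P_{-i}\mapsto\infty$, $\infty\mapsto P_i$, and carries $\{\,|z-P_{-i}|\ge\rho_{-i}\,\}$ onto $\{\,|w-P_i|\le\rho_i\,\}$; that is, $\gamma_i(\PP^1\setminus B_{-i})=B_i^+$, and by the symmetry of the formula the same identity holds for the index $-i$. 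Hence conditions (1) and (2) of a good fundamental domain hold by construction, so the generators are in good position with respect to $\cF^+$ as soon as $\Gamma=\langle\gamma_1,\dots,\gamma_g\rangle$ is shown to be Schottky.

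To identify $\Gamma$ as a Schottky group I would run the table-tennis argument. Using that the closed balls $B_j^+$ are pairwise disjoint, for $j\neq -i$ one has $B_j^+\subseteq \PP^1\setminus B_{-i}$, whence $\gamma_i(B_j^+)\subseteq B_i^+$, and likewise $\gamma_i(\cF^+)\subseteq B_i^+$. Iterating along a reduced word $w=\gamma_{i_1}\cdots\gamma_{i_n}$ (so $i_{k+1}\neq -i_k$) gives $w(\cF^+)\subseteq B_{i_1}^+$, and since $\cF^+$ meets the exterior of $B_{i_1}^+$ this forces $w\neq\mathrm{id}$; thus $\Gamma$ is free on the $\gamma_i$. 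Moreover each $\gamma_i$ maps the closed ball $\PP^1\setminus B_{-i}$ strictly into itself (onto $B_i^+$), so it is a contraction with an attracting fixed point in $B_i^+$ and, applying the same to $\gamma_i^{-1}$, a repelling fixed point in $B_{-i}^+$; hence $\gamma_i$ is hyperbolic. Conjugating an arbitrary non-trivial element to a cyclically reduced word and repeating the nesting argument shows every non-identity element is hyperbolic, whereupon the Proposition of Section~\ref{sec:basics} concludes that $\Gamma$ is Schottky.

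The one point requiring real care on the converse side is the bookkeeping of open versus closed balls: the relation $\gamma_i(\PP^1\setminus B_{-i})=B_i^+$ pairs an open ball on the source with a closed ball on the target, so the ping-pong inclusions must be routed through the \emph{closed} balls $B_j^+$ (never the open $B_j$) to remain valid, and the contraction claim must be quantified---the image radius is strictly smaller---to exclude a parabolic element. Once this is threaded cleanly, discreteness follows from the uniform separation the nesting provides, completing the identification of $\Gamma$ as a Schottky group in good position with respect to $\cF^+$.
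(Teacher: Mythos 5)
Your proposal is correct, but it does considerably more than the paper, whose entire proof is the citation ``See \cite[I.4]{GvdP}.'' You defer only the existence half (and the three listed properties of $\cF^+$) to Gerritzen--van der Put, and then actually prove the converse: the explicit generators $\gamma_i(z)=P_i+c_i/(z-P_{-i})$ with $|c_i|=\rho_i\rho_{-i}$ do lie in $\PGL_2(K)$ (determinant $-c_i\neq 0$), do satisfy $\gamma_i(\PP^1\setminus B_{-i})=B_i^+$ by a direct ultrametric computation, and the ping-pong along reduced words correctly yields freeness, while the strict-contraction argument (a cyclically reduced word sends $B_{i_1}^+$ into the \emph{open} ball $B_{i_1}$, hence onto a closed ball of strictly smaller radius) correctly rules out parabolic and elliptic elements and gives hyperbolicity of every non-identity element, so the Proposition of Section~\ref{sec:basics} applies. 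Your flagged concern about open versus closed balls is exactly the right one and you resolve it correctly. What your route buys is a self-contained, constructive verification of the converse and of ``good position''; what the paper's route buys is brevity, since the result is classical. The only point you gloss over is that the definition of a good fundamental domain also asks $\cF^+$ to be a \emph{connected} affinoid, which is automatic for the complement of finitely many proper open balls whose closures are pairwise disjoint, but deserves a sentence.
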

\begin{proof}
  See \cite[I.4]{GvdP}.
\end{proof}

 The
main result that we will use from \cite{MR} is the following:

\begin{theorem}[Morrison-Ren]
  Let $\Gamma\subset \PGL_2(K)$ be the group generated by a given finite set of matrices.
  There is an algorithm that decides whether $\Gamma$ is Schottky and,
  if it is, outputs another set of generators in good position together with a fundamental domain.
\end{theorem}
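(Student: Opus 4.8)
The statement is purely algorithmic, so the plan is to exhibit a procedure together with a termination and correctness argument, leaning on the converse half of the Gerritzen--Van der Put theorem as the certificate of success. The guiding principle is ping-pong: by that converse, if at any stage we produce $2g$ pairwise disjoint closed balls $B_{\pm 1}^+,\dots,B_{\pm g}^+$ and a generating set with $\gamma_i(\PP^1\setminus B_{-i})=B_i^+$, then $\Gamma$ is automatically Schottky and we are done. So the algorithm is a search for such a configuration, and the task splits into (a) producing candidate balls from a given generating set, (b) deciding when a good configuration has been reached, and (c) modifying the generators when it has not.

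First I would test each generator for hyperbolicity, which by the Proposition is a necessary condition: every non-identity element of a Schottky group is hyperbolic, and hyperbolicity of a matrix $\smtx abcd$ is decidable by reading off from its characteristic polynomial whether the two eigenvalues have distinct absolute value (equivalently, comparing $|\mathrm{tr}|^2$ with $|\det|$ via the Newton polygon). A hyperbolic $\gamma_i$ has an attracting and a repelling fixed point; around these one can, using Lemma~\ref{lemma:action-balls}, compute explicit radii for which $\gamma_i$ carries the complement of the ball at the repelling point onto the closed ball at the attracting point, yielding canonical candidates $B_i$, $B_{-i}$ (the isometric-circle data of $\gamma_i$). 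Collecting these over all generators gives the candidate family, and checking pairwise disjointness of the $B_i^+$ is a finite computation.

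If the balls are pairwise disjoint, the converse of Gerritzen--Van der Put certifies that $\Gamma$ is Schottky and in good position, and we output $\cF^+=\PP^1\setminus\bigcup_i B_i$. If not, the generators are not in good position and two balls overlap; I would then apply a Nielsen-type move, replacing an offending generator $\gamma_i$ by a product such as $\gamma_i\gamma_j^{\pm 1}$ chosen to pull its fixed points apart, and recompute the candidate balls. To force termination I would attach to each generating set a nonnegative integer complexity --- for instance the total translation length of the generators, or, more robustly, a combinatorial distance measured on the Bruhat--Tits tree between the axes of the generators --- and arrange the move so that this quantity strictly decreases whenever a genuine overlap is repaired. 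A separate branch must certify non-Schottky groups: if during the process one uncovers a non-hyperbolic non-identity element, or a nontrivial relation among the reduced generators (contradicting freeness), then by the Proposition $\Gamma$ cannot be Schottky and the algorithm reports this.

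The main obstacle is the correctness and termination of the reduction in part (c): one must prove that, for a genuine Schottky group, the Nielsen moves always make progress toward --- and eventually reach --- a generating set in good position (a non-archimedean analogue of Nielsen reduction, most cleanly phrased as minimizing the convex hull of the axes in the tree), while for a non-Schottky group the same process is guaranteed to expose either a non-hyperbolic element or a relation in bounded time. Establishing a strictly decreasing complexity measure that simultaneously handles both outcomes, and bounding the number of iterations, is where the real work lies; the ball computations of Lemma~\ref{lemma:action-balls} and the disjointness test are routine by comparison.
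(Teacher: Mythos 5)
The paper offers no argument of its own here --- it simply cites \cite[Section 4]{MR} --- so the only question is whether your sketch actually constitutes a proof. It does not, and you say so yourself: the final paragraph concedes that the termination and correctness of the Nielsen-reduction loop, together with the guarantee that a non-Schottky input is exposed in bounded time, is ``where the real work lies.'' But that \emph{is} the theorem. Everything you do establish --- hyperbolicity testing via the Newton polygon, computing images of balls with Lemma~\ref{lemma:action-balls}, the pairwise-disjointness test, and the use of the converse of Gerritzen--Van der Put as a success certificate --- is the routine part; the decidability claim lives entirely in the step you defer. Without a concretely defined, provably strictly decreasing complexity (Morrison and Ren work with lengths measured on the Bruhat--Tits tree) and a proof that failure of the reduction to make progress certifies non-Schottkyness, the procedure as described could loop forever on either a Schottky or a non-Schottky input.

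Two further points are shaky even at the sketch level. First, the candidate balls you attach to each generator individually (``isometric-circle data'') are not canonical and need not assemble into a good fundamental domain even when the generators \emph{are} in good position; the paper's Subsection~\ref{ssec:two-fundoms-same-gens} shows the balls are not determined by the generating set, and the correct balls for $\gamma_i$ generally depend on the whole collection of generators, not on $\gamma_i$ alone. Second, ``uncovering a nontrivial relation among the reduced generators'' is not a finite check: one cannot enumerate relations, so the non-Schottky branch must be driven by the same complexity argument (the reduction eventually produces a non-hyperbolic element or the measure stops decreasing), which again is the missing core. As it stands the proposal is a plausible outline of the algorithm of \cite[Section 4]{MR}, not a proof of the theorem.
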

\begin{proof}
  See \cite[Section 4]{MR}.
\end{proof}

\begin{remark}
  Suppose that $\Gamma\subset \PGL_2(K)$ is a Schottky group and that $\gamma_1,\ldots,\gamma_g$ are generators
  in good position. The fundamental domain $\cF^+$ need not be unique for such a set of generators,
  see Subsection~\ref{ssec:two-fundoms-same-gens} for a concrete example.

  Given a set of generators in good position, then it is obvious that any permutation of them is also
  in good position (the set of balls needs to be appropriately permuted).
  Moreover, if any of the generators is replaced by its inverse then one can simply exchange the two balls
  corresponding to this generator to obtain a fundamental domain for this new set. Are there other sets of
  generators in good position? In Subsection~\ref{ssec:two-gens-good-pos} we show that the answer is negative
  in general, by exhibiting a concrete example.
  
  It would be interesting to explain under which conditions the phenomena described above can occur.
\end{remark}

\subsection{A recursive definition of \texorpdfstring{$\Gamma_n$}{Gamma n}}

Let $\Gamma$ be a Schottky group, with good generators 
$\{\gamma_1,\ldots, \gamma_g\}$.

  The function $\length \colon \Gamma \to \ZZ$ assigns to an element $\gamma\in\Gamma$
  the length of the reduced word $w(\gamma)$ representing it.
  
  For $n\in \ZZ_{\geq 0}$, denote by
  \[
    \Gamma_{n} = \{\gamma\in \Gamma ~|~ \length(\gamma) = n\},\text{ and }
    \Gamma_{\geq n} = \bigcup_{i=n}^\infty \Gamma_n.
  \]
  
  Given $\gamma\in\Gamma_{\geq 1}$, define $t(\gamma)=\gamma_i$ (the \emph{tail} of $\gamma$)
  if $\length(\gamma\gamma_{-i}) < \length(\gamma)$ and $h(\gamma)=\gamma_j$
  (the \emph{head} of $\gamma$)
  if $\length(\gamma_{-j}\gamma) < \length(\gamma)$.
  In other words, if we can write $\gamma = \tilde \gamma \gamma_i$ with no cancellation,
  then $t(\gamma)=\gamma_i$ (and similarly for the \emph{head}).

  Finally, for $n\ge 1$ and $i\in \{\pm 1, \ldots \pm g\}$, denote by
  \[
  \Gamma_{n}^{(i)} =
  \{\gamma\in \Gamma_n ~|~  h(\gamma)=\gamma_i\}
  \]
  the subset of elements starting with the generator $\gamma_i$ and of length
  $n$.

\begin{lemma}
  \label{lem:decomposition}
  For all $n\ge 1$,
  \[
  \Gamma_{n+1}^{(i)} =
  \bigsqcup_{j\ne -i} \gamma_i\Gamma_{n}^{(j)}.
  \]
\end{lemma}

\begin{proof}
  All elements $\gamma$ in $\Gamma_{n+1}^{(i)}$
  are of the form $\gamma_i \tilde\gamma$ for a unique $i$ and $\tilde \gamma \in \Gamma_{n}$.
  Now $h(\tilde\gamma)\neq \gamma_{-i}$, and so $\tilde\gamma \in \Gamma_{n}^{(j)}$ for
  some $j\neq -i$.
\end{proof}

\section{Theta functions}
\label{sec:theta}
In this section we define the Theta function associated to a Schottky group $\Gamma$, as
a pairing on divisors of degree zero. This departs slightly from the point of view taken in
\cite{GvdP} or in \cite{MR}, but we found it easier to embrace extra flexibility of this approach.

\subsection{Definition}

Recall the pairing $\Div^0(\PP^1(K))\times \Div^0(\PP^1(K)) \to \PP^1(K)$
obtained by extending by linearity cross-ratio map
\[
  (z-w, a-b) \mapsto (z,w;a,b).
\]
Here, $(z,w;a,b)$ is the cross-ratio of these four points, which is defined
(when they are all distinct and different from $\infty$) as
\[
  (z,w;a,b)  = \frac{z-a}{z-b}\frac{w-b}{w-a}.
\]
This can be extended to all of $\PP^1(K)$ by imposing that
\[
  (g z, g w; g a, g b)=(z,w;a,b),\quad\forall g\in\PGL_2(K).
  \]
It can be extended to the case where two of the four points coincide, by
the formulas
\begin{align*}
  (z,w;z,b) &= (z,w;a,w) = 0,\\
  (z,z;a,b) &= (z,w;a,a) = 1,\\
  (z,w;a,z) &= (z,w;w,b) = \infty.\\
\end{align*}

The following proposition is well-known and justifies the choices made above:
\begin{proposition}
  The pairing
  \[
    (\cdot,\cdot)\colon \Div^0(\PP^1(K))\times \Div^0(\PP^1(K))\to \PP^1(K)
  \]
  satisfies:
  \begin{enumerate}
    \item For all $\gamma\in\PGL_2(K)$, $(\gamma D,\gamma E) = (D,E)$.
    \item For all $D$ and $E$, we have $(D,E) = (E,D)$.
    \item For all $D_1,D_2$ and all $E$,
      we have $(D_1+D_2,E) = (D_1,E)(D_2,E)$.
  \end{enumerate}
\end{proposition}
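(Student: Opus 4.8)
The plan is to reduce everything to an explicit product formula in the affine chart and let the degree-zero hypothesis do all the work. Writing $D = \sum_i n_i (P_i)$ and $E = \sum_j m_j (Q_j)$ with $\sum_i n_i = \sum_j m_j = 0$ and all $P_i, Q_j\in K$ finite, I would first establish that
\[
(D, E) = \prod_{i,j} (P_i - Q_j)^{n_i m_j}.
\]
To justify this I check it on elementary divisors $D = (z) - (w)$ and $E = (a) - (b)$, where the right-hand side collapses to $\frac{(z-a)(w-b)}{(z-b)(w-a)} = (z,w;a,b)$, matching the cross-ratio. The pairing is the biadditive extension of the cross-ratio, so it is determined on $\Div^0$ once one knows the defining relations are respected; the crucial relation $((z)-(w)) + ((w)-(u)) = (z)-(u)$ corresponds to the cocycle-type identity $(z,w;a,b)(w,u;a,b) = (z,u;a,b)$, which I would verify directly. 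Since the displayed product is visibly biadditive in $(n_i)$ and $(m_j)$ and agrees with the extension on generators, it equals $(D,E)$.

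Granting the formula, I expect properties (2) and (3) to be immediate. For multiplicativity, if $D = D_1 + D_2$ then the exponents $n_i$ add, so the product factors as $(D_1, E)(D_2, E)$. For symmetry, $(E, D) = \prod_{i,j}(Q_j - P_i)^{m_j n_i} = (-1)^{\sum_{i,j} n_i m_j}(D,E)$, and the sign is trivial because $\sum_{i,j} n_i m_j = \bigl(\sum_i n_i\bigr)\bigl(\sum_j m_j\bigr) = 0$.

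For the substantive property (1), I would take $\gamma = \smtx{a}{b}{c}{d}$ and finite points with $cP_i + d, cQ_j + d \neq 0$, using the identity $\gamma P_i - \gamma Q_j = \frac{(ad-bc)(P_i - Q_j)}{(cP_i+d)(cQ_j+d)}$ to get
\[
(\gamma D, \gamma E) = \prod_{i,j}\left(\frac{(ad-bc)(P_i - Q_j)}{(cP_i+d)(cQ_j+d)}\right)^{n_i m_j}.
\]
Here the constant factor contributes $(ad-bc)^{\sum_{i,j} n_i m_j} = 1$; the factors from each $P_i$ contribute $\prod_i (cP_i+d)^{-n_i(\sum_j m_j)} = 1$; and symmetrically the $(cQ_j+d)$ factors cancel since $\sum_i n_i = 0$. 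What remains is exactly $\prod_{i,j}(P_i - Q_j)^{n_i m_j} = (D,E)$. Every cancellation is powered by one of the two degree-zero conditions.

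The main obstacle I anticipate is the bookkeeping around the point $\infty$ and the degenerate values, where the bare product formula does not directly apply. Since each divisor is supported on finitely many points and $K$ is infinite, I would fix a preliminary $\gamma_0 \in \PGL_2(K)$ with $\gamma_0^{-1}(\infty) \notin \supp(D) \cup \supp(E)$, so that $\gamma_0 D$ and $\gamma_0 E$ are supported on finite points, and compute via the product formula applied to these representatives; the affine invariance just proved shows the result is independent of $\gamma_0$ and simultaneously upgrades (1) to all of $\PGL_2(K)$. Finally, the listed conventions for coinciding points (the values $0$, $1$, $\infty$) are precisely what the product formula returns when some $P_i - Q_j \to 0$ appears with exponent $+1$, $0$, or $-1$, so matching them requires only tracking signs of exponents rather than a separate argument.
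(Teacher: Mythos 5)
Your proof is correct; note that the paper itself states this proposition as ``well-known'' and supplies no proof, so there is nothing internal to compare against. Your route --- establishing the closed product formula $(D,E)=\prod_{i,j}(P_i-Q_j)^{n_im_j}$ in an affine chart, observing that it is biadditive and restricts to the cross-ratio on elementary pairs, and then letting the two degree-zero conditions power every cancellation in (1)--(3) --- is the standard argument and is carried out completely, including the chart-independence step that upgrades affine invariance to full $\PGL_2(K)$-invariance. Two small points are worth tightening if you write this up. First, in the degenerate cases the factor with ``exponent $0$'' does not really occur: in $(z,z;a,b)$ the divisor $D$ is the zero divisor, so the product is empty (hence $1$), rather than containing a $0^0$ term. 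Second, the product formula (like the paper's own conventions) leaves the pairing genuinely undefined when the supports overlap in such a way that a factor $0$ with positive exponent and a factor $0$ with negative exponent both occur, since $0\cdot\infty$ has no value in $\PP^1(K)$; the paper is equally silent on this, and in the application the divisors $D$ and $\gamma E$ have disjoint support for $\gamma\neq 1$, so nothing is lost.
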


For any finite subset $\Sigma\subseteq \Gamma$, define
\[
  (D,E)_\Sigma =\prod_{\gamma\in\Sigma} (D, \gamma E),
\]
We will also denote by $(D,E)_n$ and $(D,E)_{\leq n}$ the quantities
$(D,E)_{\Gamma_n}$ and $(D,E)_{\Gamma_{\leq n}}$, respectively.

The goal of this article is to give an algorithm to efficiently compute
the \emph{Theta pairing} attached to $\Gamma$, which sends a pair of divisors
of degree zero $D$, $E$ to
\[
  (D,E)_\Gamma =\lim_{n\to\infty} (D,E)_{\leq n},
    \quad D, E \in \Div^0(\Omega).
\]

The existence of the limit above is established in~\cite{GvdP}, but in Section~\ref{ssec:convergence}
we will carefully study its rate of convergence.

\subsection{Analytic functions, the canonical embedding and uniformization of the Jacobian}

Since $(D,E)_\Gamma$ is $\Gamma$-invariant in both arguments, it can be seen
as a pairing on the group of coinvariant divisors
$\Div^0(\Omega)_\Gamma= H_0(\Gamma,\Div^0(\Omega))$. There is a canonical group
homomorphism
\[
\iota  \colon \ZZ[\Gamma] \to H_0(\Gamma,\Div^0(\Omega)),\quad \gamma \mapsto \gamma z_0 - z_0,
\]
where $z_0\in \Omega$ is any choice of base point. This is so because if $z_1$ is another choice, then
\[
(\gamma z_0 - z_0) - (\gamma z_1 - z_1) = (\gamma - 1)\cdot (z0 - z1) = 0
\in H_0(\Gamma, \Div^0(\Omega)).
\]
We obtain a group homomorphism
\[
u \mapsto \ZZ[\Gamma] \to \cO(\Omega)^\times,
  \quad \gamma \mapsto u_\gamma(z) = (z-\infty, \iota(\gamma))_\Gamma.
\]
The bi-additivity of $(\cdot, \cdot)_\Gamma$ implies that this map is trivial on the commutator
subgroup $[\Gamma,\Gamma]$ of $\Gamma$. We have
\begin{proposition}
  If $\gamma \notin [\Gamma, \Gamma]$, then $u_\gamma(z)$ is non-constant.
\end{proposition}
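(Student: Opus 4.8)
The plan is to show that a hypothetical constancy of $u_\gamma$ would force the entire period pairing against $\gamma$ to be trivial, and then to invoke the non-degeneracy (positive definiteness) of that pairing. First I would record how $u_\gamma$ transforms under the $\Gamma$-action on $z$. For $\delta\in\Gamma$, writing
\[
\delta z - \infty = \delta(z-\infty) + (\delta\infty - \infty)
\]
and using bi-additivity together with the $\Gamma$-invariance of $(\cdot,\cdot)_\Gamma$ in the first argument, one obtains
\[
u_\gamma(\delta z) = (z-\infty, \iota(\gamma))_\Gamma \cdot (\delta\infty - \infty, \iota(\gamma))_\Gamma = u_\gamma(z)\cdot c_\gamma(\delta),
\]
where $c_\gamma(\delta) := (\delta\infty - \infty, \iota(\gamma))_\Gamma$ is independent of $z$. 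Since $\delta\infty - \infty$ and $\delta z_0 - z_0$ represent the same class $\iota(\delta)$ in $H_0(\Gamma,\Div^0(\Omega))$ (by the same base-point argument used to define $\iota$), I may rewrite $c_\gamma(\delta) = (\iota(\delta),\iota(\gamma))_\Gamma$, the symmetric period pairing on $\Gamma^{\ab}$.

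Next I would observe that $u_\gamma\in\cO(\Omega)^\times$ is nowhere vanishing, so if $u_\gamma$ were constant the automorphy factor would satisfy $c_\gamma(\delta)=1$ for every $\delta\in\Gamma$. Taking absolute values in the transformation law gives $|u_\gamma(\delta z)| = |c_\gamma(\delta)|\,|u_\gamma(z)|$, so it is in fact enough to exhibit a single $\delta$ with $|c_\gamma(\delta)|\neq 1$. The natural choice is $\delta = \gamma$, yielding the diagonal period $c_\gamma(\gamma) = (\iota(\gamma),\iota(\gamma))_\Gamma$.

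The final step is to show $c_\gamma(\gamma)\neq 1$ whenever $\gamma\notin[\Gamma,\Gamma]$. This is precisely the positive definiteness of the Mumford period matrix: the assignment $(\gamma,\delta)\mapsto v\big((\iota(\gamma),\iota(\delta))_\Gamma\big)$ descends to a symmetric, positive-definite bilinear form on $\Gamma^{\ab}$, so that $v(c_\gamma(\gamma)) > 0$, i.e.\ $|c_\gamma(\gamma)| < 1$, for $\gamma$ nontrivial in $\Gamma^{\ab}$. This non-degeneracy is the crux and the main obstacle, being the one input that cannot be circumvented: I would cite it from \cite{GvdP}, or, for a self-contained argument, prove it by isolating the dominant term of the product defining $(\iota(\gamma),\iota(\gamma))_\Gamma$ via the good fundamental domain, where the nested geometry of the pairwise disjoint balls $B_i^+$ forces the reduced-word contributions to shift the valuation by a controlled, strictly positive amount. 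Given $|c_\gamma(\gamma)|<1$ and the fact that $u_\gamma$ is nowhere zero, the identity $|u_\gamma(\gamma z)| = |c_\gamma(\gamma)|\,|u_\gamma(z)|$ shows that $|u_\gamma|$ is non-constant on $\Omega$, and hence so is $u_\gamma$.
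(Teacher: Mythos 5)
Your argument is sound in outline, but be aware that the paper does not actually prove this proposition: its ``proof'' is the single citation \cite[page 59]{GvdP}, so the real comparison is between your reduction and what Gerritzen--van der Put do. Your derivation of the automorphy law $u_\gamma(\delta z) = c_\gamma(\delta)\, u_\gamma(z)$ with $c_\gamma(\delta) = (\iota(\delta),\iota(\gamma))_\Gamma$ is correct (it uses the decomposition $(\delta z)-(\infty) = \delta\bigl((z)-(\infty)\bigr) + \bigl((\delta\infty)-(\infty)\bigr)$, multiplicativity, and the $\Gamma$-invariance from Lemma~\ref{lem:theta-conjugate}; note it silently requires $\infty\in\Omega$, which the paper only imposes later but which is already implicit in the definition of $u_\gamma$), and the deduction that constancy of the unit $u_\gamma$ forces $c_\gamma(\delta)=1$ for all $\delta$, in particular $(\iota(\gamma),\iota(\gamma))_\Gamma = 1$, is fine. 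The entire weight of the proof therefore rests on the input $|(\iota(\gamma),\iota(\gamma))_\Gamma|<1$ for $\gamma\notin[\Gamma,\Gamma]$, i.e.\ positive definiteness of $-\log|\langle\cdot,\cdot\rangle|$ on $\Gamma^{\ab}$. Two caveats here. First, this is a strictly stronger theorem than the proposition you are proving (the proposition only needs $c_\gamma(\delta)\neq 1$ for a single $\delta$), so you are not simplifying matters; your sketch of a self-contained proof via ``the dominant term of the product'' is exactly the hard part and is not carried out. Second, if you cite positive definiteness from \cite{GvdP}, you must check it is established there independently of the non-constancy of $u_\gamma$ — in \cite{GvdP} the bound $|c(\alpha,\alpha)|<1$ is indeed proved first by direct estimates and non-constancy is deduced from it, essentially along the lines you propose, so there is no circularity, but this ordering is the thing to verify before outsourcing the crux. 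In short: correct strategy, standard route, with the genuinely difficult step cited rather than proved.
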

\begin{proof}
  See~\cite[page 59]{GvdP}.
\end{proof}

\newcommand{\dlog}{\operatorname{dlog}}

\begin{proposition}
  Suppose that $g\geq 3$, and let $\gamma_1,\ldots, \gamma_g$ be a set of free generators for $\Gamma$.
  The map
  \[
    \Omega/\Gamma \to \PP^{g-1}_K,
    \quad z \mapsto (\dlog u_{\gamma_1} : \dlog u_{\gamma_2} : \cdots : \dlog u_{\gamma_g})
  \]
  is the canonical embedding of the curve $\Omega/\Gamma$ in projective space.  
\end{proposition}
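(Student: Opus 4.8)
The plan is to identify the displayed map with the map attached to the canonical linear system. Writing $\omega_i := \dlog u_{\gamma_i}$, I would proceed in three steps: first check that each $\omega_i$ descends to a global holomorphic differential on $C=\Omega/\Gamma$; then show that $\omega_1,\dots,\omega_g$ form a basis of $H^0(C,\Omega^1_C)$; and finally invoke the classical fact that for a non-hyperelliptic curve of genus $g\ge 3$ the canonical map is a closed embedding.

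For the first step, recall that $u_{\gamma_i}\in\cO(\Omega)^\times$ is nowhere vanishing, so $\omega_i=\frac{u_{\gamma_i}'}{u_{\gamma_i}}\,dz$ is holomorphic on all of $\Omega$. To see $\Gamma$-invariance, I would compute the automorphy factor of $u_{\gamma_i}$: decomposing $\delta z-\infty=\delta_*(z-\infty)+(\delta\infty-\infty)$ and using bi-additivity together with $(\delta_* D,E)_\Gamma=(D,E)_\Gamma$ (a consequence of property~(1) and reindexing the product over $\Gamma$), one obtains
\[
u_{\gamma_i}(\delta z)=c_i(\delta)\,u_{\gamma_i}(z),\qquad c_i(\delta)=(\delta\infty-\infty,\iota(\gamma_i))_\Gamma,\quad \delta\in\Gamma,
\]
with $c_i(\delta)$ independent of $z$. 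Taking logarithmic derivatives kills the constant $c_i(\delta)$, so $\delta^*\omega_i=\omega_i$ and $\omega_i$ descends to $C$; since $C$ is proper and $\omega_i$ is holomorphic on every $\Gamma$-translate of the good fundamental domain, the descended form lies in $H^0(C,\Omega^1_C)$.

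The heart of the argument is linear independence. As $\dim_K H^0(C,\Omega^1_C)=g$, it suffices to prove that $\omega_1,\dots,\omega_g$ are linearly independent, whence they automatically form a basis. Here I would appeal to Mumford's uniformization of the Jacobian: the multiplicative periods $q_{ij}=c_i(\gamma_j)=(\gamma_j\infty-\infty,\iota(\gamma_i))_\Gamma$ assemble into the period matrix of $C$, whose matrix of valuations $(v(q_{ij}))$ is symmetric and positive definite, hence non-degenerate (see \cite{GvdP}). A relation $\sum_i a_i\omega_i=0$ would force all periods of $\sum_i a_i\omega_i$ to vanish; writing these periods in terms of the $\log q_{ij}$ and using the non-degeneracy of $(v(q_{ij}))$ forces $a_1=\cdots=a_g=0$. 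I expect this to be the main obstacle, since it is precisely the point where the analytic theta functions must be tied to the algebraic cohomology of $C$.

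Finally, with $\omega_1,\dots,\omega_g$ a basis of $H^0(C,\Omega^1_C)$, the displayed map is by definition the canonical map $z\mapsto(\omega_1:\cdots:\omega_g)$. For $g\ge 3$ and $C$ non-hyperelliptic, Max Noether's theorem shows this map is a closed embedding, which is the assertion; in the hyperelliptic case the $\omega_i$ still give a basis of differentials, but the canonical map is only two-to-one onto a rational normal curve, so there the statement should be read as identifying the canonical \emph{map}.
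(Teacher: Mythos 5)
The paper does not actually prove this proposition; it simply cites \cite[VI.4]{GvdP} and \cite[\S 3.3]{MR}, so any honest argument you give is already ``a different route.'' Your outline (descent of the $\omega_i=\dlog u_{\gamma_i}$ to $C$, then show they span $H^0(C,\Omega^1_C)$, then quote Noether) is the standard one, and the first step is fine: the automorphy relation $u_{\gamma_i}(\delta z)=c_i(\delta)u_{\gamma_i}(z)$ with constant $c_i(\delta)$ is exactly right, and $\dlog$ kills the constant.

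The gap is in the step you yourself identify as the heart of the matter. From a relation $\sum_i a_i\omega_i=0$ with $a_i\in K$ you cannot form $\prod_i u_{\gamma_i}^{a_i}$, so you cannot directly conclude that the multiplicative periods satisfy $\prod_i q_{ij}^{a_i}=1$; your argument as written only handles integer (or rational) coefficients. To get a $K$-linear pairing against $\Gamma^{\ab}$ you would need a $p$-adic integration theory (Coleman/Schneider), and then the relevant additive period matrix is no longer obviously non-degenerate: positive-definiteness is a statement about the valuations $v(q_{ij})$, not about $\log q_{ij}$, and the two do not substitute for one another. The argument in \cite{GvdP} avoids this by working with residues: each $\omega\in H^0(C,\Omega^1_C)$ has well-defined residues along the annuli of the stable reduction, the resulting map to harmonic cochains on the reduction graph is injective, and the cochains attached to $\dlog u_{\gamma_1},\ldots,\dlog u_{\gamma_g}$ are visibly a basis (they correspond to the generators of $H_1$ of the graph). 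That is the missing ingredient; with it, linear independence over $K$ is immediate. Finally, you are right to flag the hyperelliptic case: hyperelliptic Mumford curves exist in every genus $g\geq 2$, so the proposition as stated should either assume $C$ non-hyperelliptic or speak of the canonical \emph{map}; this caveat is present in \cite[\S 3.3]{MR} but suppressed in the statement here, and your remark is a genuine (if minor) correction rather than an error on your part.
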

\begin{proof} 
  See~\cite[VI.4]{GvdP} and~\cite[\S 3.3]{MR}.
\end{proof}

Consider the bilinear pairing
\[
  \langle \cdot,\cdot\rangle \colon \Gamma^\ab \times \Gamma^\ab \to K^\times,
  \quad (\alpha, \beta) \mapsto (\iota(\alpha), \iota(\beta))_\Gamma.
\]
Its matrix is the \emph{period matrix} of the Jacobian $\operatorname{Jac}(\Omega/\Gamma)$:

\begin{theorem}
  The Jacobian $\operatorname{Jac}(\Omega/\Gamma)$ is isomorphic to $(K^\times)^g / \Lambda$,
  where $\Lambda$ is the $g \times g$ period matrix defined above.
\end{theorem}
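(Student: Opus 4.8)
The plan is to realize the isomorphism as the non-archimedean Abel--Jacobi map, using the theta functions $u_\gamma$ already constructed to produce the analytic uniformization of $\operatorname{Jac}(\Omega/\Gamma)$.

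First I would make sense of the target. Writing $C=\Omega/\Gamma$, since $\Gamma$ is free of rank $g$ the abelianization $\Gamma^{\ab}$ is free on the classes of $\gamma_1,\dots,\gamma_g$, and the period matrix $\Lambda=(\langle\gamma_i,\gamma_j\rangle)$ defines a homomorphism
\[
  \lambda\colon \Gamma^{\ab}\to (K^\times)^g,\qquad \gamma_i\mapsto \bigl(\langle\gamma_i,\gamma_1\rangle,\dots,\langle\gamma_i,\gamma_g\rangle\bigr).
\]
I would first check that $\lambda$ is injective with image a discrete, full-rank lattice, so that $T=(K^\times)^g/\lambda(\Gamma^{\ab})$ is a well-defined rigid-analytic torus. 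Concretely this amounts to verifying that the symmetric $\ZZ$-bilinear form $(\alpha,\beta)\mapsto -\log|\langle\alpha,\beta\rangle|$ on $\Gamma^{\ab}$ is positive definite, which is exactly the non-degeneracy of the period pairing and guarantees, by the Gerritzen--van der Put uniformization theorem, that $T$ is the analytification of an abelian variety of dimension $g$.

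Next I would construct the Abel--Jacobi map by lifting it to $\Omega$. Fixing the base point $z_0$, send $z\mapsto (u_{\gamma_1}(z),\dots,u_{\gamma_g}(z))$. The key computation is the transformation law under $\delta\in\Gamma$: using bi-additivity together with the fact that $\delta z - z$ and $\iota(\delta)$ have the same class in $H_0(\Gamma,\Div^0(\Omega))$, one obtains
\[
  u_{\gamma_j}(\delta z) = \langle \delta,\gamma_j\rangle\, u_{\gamma_j}(z),
\]
so that moving $z$ by $\delta$ multiplies its image by $\lambda(\delta)\in\Lambda$. Hence the map descends to $C\to T$, and extending it additively over degree-zero divisors yields a homomorphism $\Phi\colon \Div^0(C)\to T$.

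Finally I would identify $\Phi$ with the Abel--Jacobi map and compute its kernel. Since $\Phi$ is bi-additive and $\Gamma$-invariant it factors through $\operatorname{Pic}^0(C)=\operatorname{Jac}(C)$, giving $\bar\Phi\colon \operatorname{Jac}(C)\to T$; surjectivity follows because the $u_{\gamma_j}$ separate enough points, so $\bar\Phi$ is a nonconstant analytic homomorphism between proper groups of the same dimension. The hard part is injectivity---the non-archimedean Abel theorem---which requires showing that a degree-zero divisor with trivial image in $T$ is principal. I would prove this by exhibiting, for a divisor $\sum_i([a_i]-[b_i])$ with trivial periods, the honest meromorphic function on $C$ given by the corresponding product of theta functions $\prod_i \Theta_\Gamma(z;a_i,b_i)$: triviality of the periods is precisely the condition that this product is $\Gamma$-invariant and hence descends to $C$ with the prescribed divisor. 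This last step is where all the analytic input lives, and I expect it to be the main obstacle; it is carried out in detail in \cite{GvdP}, to which the final appeal is made.
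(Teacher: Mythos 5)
The paper offers no argument here beyond the citation to \cite[VI.2]{GvdP}, and your sketch is a faithful outline of exactly the proof given in that reference: the Abel--Jacobi map built from the automorphy factors $u_{\gamma_j}(\delta z)=\langle\delta,\gamma_j\rangle\,u_{\gamma_j}(z)$, discreteness of $\Lambda$ via positive definiteness of $-\log|\langle\cdot,\cdot\rangle|$, and Abel's theorem via products of theta functions. Since you, like the paper, ultimately defer the substantive analytic inputs to the same source, this is essentially the same approach and is correct.
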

\begin{proof}
  See~\cite[VI.2]{GvdP}.
\end{proof}

\subsection{Theta functions of discontinuous groups}

We start by proving a simple lemma.
\begin{lemma}
  \label{lem:theta-conjugate}
  For all $g \in \PGL_2(K)$ we have
  \[
  (gD,gE)_\Gamma = (D,E)_{g^{-1}\Gamma g}.
  \]
  In particular, if  $g$ belongs to the normalizer of $\Gamma$ in $\PGL_2(K)$, then
  \[
    (gD,gE)_\Gamma = (D,E)_{\Gamma}.
  \]
\end{lemma}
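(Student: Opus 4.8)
The plan is to compute $(gD, gE)_\Gamma$ directly from its definition as a limit of partial products and to push the Möbius transformation $g$ through the cross-ratio pairing using property (1) of the pairing proposition, namely $\PGL_2(K)$-invariance $(\gamma D, \gamma E) = (D, E)$. First I would unwind the definition: by construction,
\[
  (gD, gE)_\Gamma = \lim_{n\to\infty} \prod_{\gamma \in \Gamma_{\le n}} (gD, \gamma\, gE).
\]
The key observation is that each individual factor $(gD, \gamma\, gE)$ can be rewritten by inserting $g^{-1}g$ and using the invariance of the cross-ratio under the single element $g$: writing $\gamma\, g = g\,(g^{-1}\gamma g)$, we get $(gD, \gamma g E) = (gD, g\,(g^{-1}\gamma g) E) = (D, (g^{-1}\gamma g) E)$, where the last equality is precisely property (1) applied with the transformation $g$.

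Next I would assemble these rewritten factors. As $\gamma$ ranges over $\Gamma_{\le n}$, the conjugate $g^{-1}\gamma g$ ranges over the corresponding finite subset $g^{-1}\Gamma_{\le n}\, g$ of the conjugate group $g^{-1}\Gamma g$. Since conjugation by $g$ is a group isomorphism $\Gamma \to g^{-1}\Gamma g$, it is in particular a bijection on the relevant finite index sets, so the product is merely reindexed, not altered:
\[
  \prod_{\gamma \in \Gamma_{\le n}} (D, (g^{-1}\gamma g) E)
  = \prod_{\delta \in g^{-1}\Gamma_{\le n} g} (D, \delta E).
\]
Taking the limit as $n\to\infty$ gives exactly $(D, E)_{g^{-1}\Gamma g}$, which establishes the first displayed identity. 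The second identity is then immediate: if $g$ normalizes $\Gamma$, then $g^{-1}\Gamma g = \Gamma$, so $(D, E)_{g^{-1}\Gamma g} = (D, E)_\Gamma$.

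The only genuinely delicate point — and the step I would flag as the main obstacle — is justifying the interchange of the reindexing with the limit, i.e.\ making sure the two limits define the same value. The subtlety is that the finite truncation is taken by word length with respect to the fixed generators of $\Gamma$, whereas the conjugate group $g^{-1}\Gamma g$ has a \emph{different} natural generating set (the conjugates $g^{-1}\gamma_i g$), and the filtration by length need not be preserved under conjugation. However, since the limit defining $(D,E)_\Gamma$ exists (as asserted via \cite{GvdP}) and the product is absolutely convergent in the non-archimedean sense — so that its value is independent of the order and of the particular exhaustion of $\Gamma$ by finite subsets — the reindexing by the bijection $\gamma \mapsto g^{-1}\gamma g$ produces a cofinal family of finite subsets of $g^{-1}\Gamma g$ and hence the same limiting value. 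I would make this last point explicit by invoking the unconditional convergence of the defining product, so that any exhaustion of the (conjugate) group computes the pairing.
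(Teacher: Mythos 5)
Your proof is correct and follows essentially the same route as the paper's: rewrite each factor as $(gD,\gamma gE)=(D,g^{-1}\gamma gE)$ by $\PGL_2(K)$-invariance and reindex via the bijection $\gamma\mapsto g^{-1}\gamma g$. Your added discussion of why the reindexing commutes with the limit (unconditional convergence of the product) is a point the paper's one-line proof silently glosses over, but it is extra care within the same argument rather than a different approach.
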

\begin{proof}
  It is a matter of a simple computation:
  \[
  \prod_{\gamma \in \Gamma} (gD,\gamma g E) = \prod_{\gamma \in \Gamma} (D,g^{-1} \gamma g E).
  \]
  Since $\gamma \mapsto g^{-1}\gamma g$ is a bijection $\Gamma \to g^{-1}\Gamma g$ the result follows.
\end{proof}

The above result allows for the definition (already present in~\cite{GvdP}) of the Theta function
attached to any discontinuous group $G$. Indeed, let $\Gamma \trianglelefteq G$ be a finite-index
normal Schottky subgroup, and write
\[
G = \bigcup_{i=1}^h \Gamma g_i.
\]
Define the Theta function attached to $G$ as
\[
(D,E)_G = \prod_{i=1}^h (D,g_iE)_\Gamma = (D, \sum_{i=1}^h g_i E)_\Gamma.
\]
\begin{proposition}
  The definition of $(\cdot, \cdot)_G$ does not depend on the choice of $\Gamma$.
\end{proposition}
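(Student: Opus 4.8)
The plan is to show that $(D,E)_G$ is independent of the chosen finite-index normal Schottky subgroup $\Gamma \trianglelefteq G$. The key observation is that any two such subgroups $\Gamma_1, \Gamma_2$ admit a common refinement: since both have finite index in $G$, their intersection $\Gamma_3 = \Gamma_1 \cap \Gamma_2$ is again a finite-index normal Schottky subgroup of $G$. (Normality is immediate as an intersection of normal subgroups; finite index follows from both factors having finite index; and a finite-index subgroup of a Schottky group is again Schottky, being free, discrete and finitely generated.) Thus it suffices to prove that whenever $\Gamma' \trianglelefteq \Gamma \trianglelefteq G$ with $\Gamma'$ of finite index in $\Gamma$, the definition computed using $\Gamma'$ agrees with the one using $\Gamma$.

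First I would fix coset decompositions $G = \bigsqcup_{i=1}^h \Gamma g_i$ and $\Gamma = \bigsqcup_{j=1}^k \Gamma' \delta_j$, so that $G = \bigsqcup_{i,j} \Gamma' \delta_j g_i$ is a decomposition of $G$ into $\Gamma'$-cosets. The definition via $\Gamma$ gives $(D,E)_G = \prod_{i=1}^h (D, g_i E)_\Gamma$, while the definition via $\Gamma'$ gives $(D,E)_G = \prod_{i,j} (D, \delta_j g_i E)_{\Gamma'}$. The heart of the matter is therefore the single identity
\[
(D, F)_\Gamma = \prod_{j=1}^k (D, \delta_j F)_{\Gamma'}
\]
for any degree-zero divisor $F$, applied with $F = g_i E$ and multiplied over $i$. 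This identity is itself just a reindexing: expanding the right-hand side gives $\prod_{j=1}^k \prod_{\gamma' \in \Gamma'} (D, \gamma' \delta_j F)$, and the map $(\gamma', j) \mapsto \gamma' \delta_j$ is a bijection onto $\Gamma$ by the coset decomposition, so this equals $\prod_{\gamma \in \Gamma}(D, \gamma F) = (D, F)_\Gamma$.

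I expect the main obstacle to be dispatching the convergence and limit bookkeeping rigorously rather than the algebra, which is formal. Each factor $(D, g_i E)_\Gamma$ is an infinite product, defined as the limit $\lim_n (D, g_i E)_{\Gamma_{\le n}}$, and the rearrangement above interchanges this limit with the finite product over $j$. Because there are only finitely many cosets, the reindexing respects the partial products up to a bounded shift in word length (each $\delta_j$ has bounded length), so one may pass to the limit on both sides; invoking the convergence established in~\cite{GvdP} (and quantified in Section~\ref{ssec:convergence}), all the limits involved exist and the finite product commutes with them. The bi-additivity and $\PGL_2(K)$-invariance from the earlier proposition, together with Lemma~\ref{lem:theta-conjugate}, guarantee that every term is well-defined and that normality of the subgroups is exactly what makes the conjugates $\delta_j^{-1}\Gamma'\delta_j$ and $g_i^{-1}\Gamma g_i$ coincide with $\Gamma'$ and $\Gamma$ respectively, so no stray conjugation survives. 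Combining the two displayed identities then yields equality of the two definitions, completing the proof.
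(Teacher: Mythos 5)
Your proof is correct and follows essentially the same route as the paper's: reduce to comparing a Schottky group with a finite-index normal Schottky subgroup via their intersection, then reindex the product over cosets. You are somewhat more explicit than the paper about why the intersection is again Schottky and about the rearrangement of the conditionally-ordered infinite products, but the underlying argument is identical.
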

\begin{proof}
  By taking the intersection of any two given normal finite-index Schottky groups,
  we reduce to showing that, if $\Gamma_2 \trianglelefteq \Gamma_1$ are two Schottky groups with
  $[\Gamma_1:\Gamma_2]<\infty$, then $(D,E)_{\Gamma_1} = \prod_{j=1}^s (D,g_jE)_{\Gamma_2}$, where
  $\{g_1,\ldots,g_s\}$ are coset representatives for the quotient. Since each element
  $\gamma \in \Gamma_1$ can be uniquely written as $\tau g_j$ for a unique $\tau\in \Gamma_2$
  and $j\in\{1,\ldots s\}$, we have
  \[
  (D,E)_{\Gamma_1} = \prod_{\gamma \in \Gamma_1} (D,\gamma E) = \prod_{j=1}^s\prod_{\tau \in \Gamma_2} (D, \tau g_j E) =
  \prod_{j=1}^s (D, g_jE)_{\Gamma_2}.
  \]
\end{proof}

Thanks to the previous results, we can reduce the computation of the Theta function
of a non-archimedean discontinuous group to that of a Schottky group, at essentially no extra
cost, since as we will see the performance of the algorithm very insensitive to the input
divisor $E$. This assumes that we are given a finite index normal Schottky group, which
in some cases of interest (for example $p$-adic Whittaker groups, or quaternionic groups) is
feasible. See~\cite{amoros-milione} or~\cite[Chapter 9]{GvdP} for more information.

\subsection{Convergence properties}
\label{ssec:convergence}

Since our goal is to approximate $(D,E)_\Gamma$ with $(D,E)_{\Gamma_{\leq n}}$,
we need to estimate
for which $n$ our approximation is good enough. We assume henceforth that
$\infty \in \Omega$. If $\Omega \cap \PP^1(K) \neq \emptyset$ this can be accomplished
by conjugating $\Gamma$ and using Lemma~\ref{lem:theta-conjugate}. Otherwise, one may
replace $K$ by any larger field and reduce to the previous situation.

To each $\gamma\in\Gamma_{\geq 1}$ we attach a corresponding open ball $B(\gamma)$,
by setting
\[
  B(\gamma)= \gamma(\PP^1\setminus B_{-i}^+),
  \quad \text{if } t(\gamma)=\gamma_i.
\]
Note that if $\length(\gamma_1\gamma) = \length(\gamma_1)+\length(\gamma)$ then
\[
B(\gamma_1\gamma) = \gamma_1 B(\gamma).  
\]

\begin{proposition}
  \label{prop:infty-not-in-balls}
  There exists $n=n(\Gamma)\geq 1$ such that
  for all $\gamma\in \Gamma_{\geq n}$ we have $B(\gamma)^+$ is proper (i.e. $\infty \notin B(\gamma)^+$).
\end{proposition}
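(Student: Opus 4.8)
The plan is to track what happens to the attached ball $B(\gamma)$ as the length of $\gamma$ grows, and to show that the images of $\infty$ under elements of $\Gamma$ accumulate only at the limit set $\Sigma$, staying uniformly away from any fixed ball once the length is large. Recall that $B(\gamma) = \gamma(\PP^1 \setminus B_{-i}^+)$ when $t(\gamma) = \gamma_i$, and that for a reduced product with no cancellation we have $B(\gamma_1\gamma) = \gamma_1 B(\gamma)$. Since $\infty \in \Omega$ by our standing assumption, the point $\infty$ lies in the fundamental domain region (or at least outside all the $B_i$ for a good choice), so the condition $\infty \notin B(\gamma)^+$ is equivalent, via Lemma~\ref{lemma:action-balls}, to a statement about whether $\gamma^{-1}(\infty)$ lands inside the starting ball $\PP^1\setminus B_{-i}^+$.

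First I would make the reduction explicit: by Lemma~\ref{lemma:action-balls}, applied to $g = \gamma$ and the ball $B = \PP^1 \setminus B_{-i}^+$, the image $B(\gamma)^+$ fails to be proper (i.e. contains $\infty$) precisely when $\gamma^{-1}(\infty)$ lies in $B$, that is, when $\gamma^{-1}(\infty) \notin B_{-i}^+$. Equivalently, $B(\gamma)^+$ \emph{is} proper exactly when $\gamma^{-1}(\infty) \in B_{-i}^+$, where $\gamma_i = t(\gamma)$. So the proposition reduces to showing that there is a uniform $n$ such that for every $\gamma$ of length at least $n$, the point $\gamma^{-1}(\infty)$ lies in the closed ball $B_{-i}^+$ associated to the tail generator. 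Writing $\gamma = \tilde\gamma \gamma_i$ with $t(\gamma) = \gamma_i$, we have $\gamma^{-1} = \gamma_{-i}\tilde\gamma^{-1}$, so $\gamma^{-1}(\infty) = \gamma_{-i}(\tilde\gamma^{-1}(\infty))$; the head of $\gamma^{-1}$ is $\gamma_{-i}$, and $\gamma^{-1}$ has the same length as $\gamma$.

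The key geometric input is that $\infty \in \cF^+$ (after possibly replacing $\infty$ by a point in the fundamental domain, but $\infty \in \Omega$ suffices to put it in some translate $\gamma_0 \cF^+$, and one can absorb $\gamma_0$ into the enumeration). Because the good fundamental domain satisfies $\gamma_i(\PP^1 \setminus B_{-i}) = B_i^+$, repeated application of the generators shrinks: for a reduced word $\delta = \gamma_{j_1}\cdots\gamma_{j_m}$ the image $\delta(\cF^+)$ is contained in $B_{j_1}^+$, and more precisely the nested images $\delta(\cF^+) \subseteq \gamma_{j_1}\cdots\gamma_{j_{m-1}}(B_{j_m}^+)$ form a strictly decreasing sequence of closed balls whose radii tend to zero as $m \to \infty$. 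This is the standard contraction property of Schottky groups: the diameters of the balls $B(\gamma)$ decay, uniformly, because each generator is a contraction on the complement of its repelling ball. I would extract from the Gerritzen--van der Put theorem (or directly from the disjointness of the $B_i^+$ together with the equivariance $\gamma_i(\PP^1\setminus B_{-i}) = B_i^+$) a uniform contraction factor $\lambda < 1$ such that $\operatorname{diam} B(\gamma) \le \lambda^{\length(\gamma)} \cdot C$ for a constant $C$.

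The main obstacle is making the contraction quantitative and uniform over all words, rather than just asymptotic along a single ray. Concretely, I must bound $|\gamma'(P)|$ (the derivative factor from Lemma~\ref{lemma:action-balls}) from above by a constant times $\lambda^{\length(\gamma)}$, uniformly in the choice of base point $P$ inside the relevant ball. The cleanest route is induction on length: using $B(\gamma_i\gamma) = \gamma_i B(\gamma)$ and the fact that $\gamma_i$ maps $\PP^1 \setminus B_{-i}$ into $B_i^+$, one checks that each application of a generator multiplies the radius by at most $\max_i \lambda_i$ where $\lambda_i < 1$ is the contraction ratio of $\gamma_i$ on $\PP^1 \setminus B_{-i}^+$ (finite in number, hence with a uniform maximum $\lambda$). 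Once $\operatorname{diam} B(\gamma)$ is smaller than the distance from $\infty$ to the union $\bigcup_i B_i^+$ --- a fixed positive quantity since $\infty \in \cF^+$ and the balls are closed and disjoint from $\cF^+$ --- the ball $B(\gamma)$ cannot contain $\infty$, giving the desired uniform $n$. The finiteness of the generating set is what makes all the relevant constants ($\lambda$, $C$, and the gap around $\infty$) simultaneously controllable.
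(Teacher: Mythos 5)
Your strategy is genuinely different from the paper's. The paper gives a short, soft argument by contradiction: if $\infty$ lay in $B(\gamma_m)^+$ for infinitely many distinct $\gamma_m$ of growing length, then for any $z\in\cF$ the points $\gamma_m z\in B(\gamma_m)$ would tend to $\infty$, exhibiting $\infty$ as a limit point of an orbit and contradicting $\infty\in\Omega$. No radius estimates are needed. You instead try to prove a quantitative decay $\diam B(\gamma)\le C\lambda^{\length(\gamma)}$ first and deduce the proposition from a ``gap around $\infty$''. As written, this has two genuine gaps.

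First, the one-step contraction claim is false: it is not true that each $\gamma_i$ multiplies radii by a uniform factor $\lambda_i<1$ on $\PP^1\setminus B_{-i}^+$. Writing $\pi_i=\gamma_i^{-1}(\infty)\in B_{-i}$, Lemma~\ref{lemma:action-balls} together with $\gamma_i(\PP^1\setminus B_{-i})=B_i^+$ gives $|\gamma_i'(P)|=r_ir_{-i}/|P-\pi_i|^2$; for $P\in B_j^+$ with $j\ne \pm i$ the quantity $|P-\pi_i|$ is only bounded below by the gap between $B_j^+$ and $B_{-i}^+$, which can be much smaller than $\sqrt{r_ir_{-i}}$ (take one very large ball $B_i$ and several small balls crowded together outside $B_i^+$), so $\gamma_i$ can \emph{expand} $B_j^+$ by an arbitrarily large factor. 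The decay of $r(\gamma)$ is a genuinely multi-step phenomenon, and the paper only establishes it (in the proposition following this one) for $\gamma\in\Gamma_{\ge n(\Gamma)}$, i.e.\ \emph{using} the present proposition: one needs the balls to be proper before their radii are even defined, so your plan is circular unless you redo everything in the chordal metric on $\PP^1$, which you do not. Second, your endgame requires $\infty$ to be at positive distance from $\bigcup_i B_i^+$, i.e.\ essentially $\infty\in\cF$; the hypothesis is only $\infty\in\Omega$, and if $\infty\in\gamma_0\cF^+$ with $\gamma_0\ne 1$ then $\infty$ really does lie in $B(\delta)^+$ for every prefix $\delta$ of $\gamma_0$, so no such gap exists — this is precisely why the statement only asserts properness for $\length(\gamma)\ge n(\Gamma)$, and ``absorbing $\gamma_0$ into the enumeration'' needs to be carried out, not just asserted. (Your opening reduction to $\gamma^{-1}(\infty)\in B_{-i}$ is correct up to an open/closed swap, but it is never used afterwards.) If you want an effective flavor, a cleaner route is the identity $\PP^1\setminus\bigcup_{\gamma\in\Gamma_{<n}}\gamma\cF=\bigcup_{\gamma\in\Gamma_n}B(\gamma)$ quoted in the paper: once $\infty$ is located in some $\gamma_0\cF$, no $B(\gamma)$ with $\length(\gamma)>\length(\gamma_0)$ can contain it.
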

\begin{proof}
  Suppose that for all $n$ there is some $\gamma_n\in \Gamma_{\geq n}$
  with $\infty \in B(\gamma)^+$.
  One can then extract a sequence $(\gamma_{m})_{m\geq 1}$ of pairwise
  distinct elements
  such that $\infty \in B(\gamma_m)$ for all $m$. For any $z \in \cF$,
  we have $\gamma_m z \in B(\gamma_m)$. But then $|\gamma_m z|\to \infty$,
  hence $\gamma_m z \to \infty$ with $m$, which contradicts the
  assumption that $\infty\in\Omega$.
\end{proof}

\begin{remark}
  The $n(\Gamma)$ in the above proposition can be algorithmically computed:
  it suffices to calculate $B(\gamma)$ for all $\gamma \in \Gamma_n$ for increasing $n$.
  Once we reach some $n$ for which none of the $B(\gamma)$ contain $\infty$,
  we know this condition will be preserved for all $m\geq n$.
\end{remark}

When the balls $B(\gamma)$ are proper, we would like to estimate their radii $r(\gamma)$
in terms of the length of $\gamma$. Let $\gamma, \gamma' \in \Gamma_{\geq n}$. Then
$B(\gamma')\subseteq B(\gamma)$ if and only if $\gamma$ is a prefix of $\gamma'$
(that is, $\gamma'=\gamma t$ with $\length(\gamma')=\length(\gamma)+\length(t)$).
Also, $\gamma(\infty) \in B(\gamma)$ for all $\gamma\in \Gamma_{\geq 1}$, and
\[
  \PP^1 \setminus \bigcup_{\gamma \in \Gamma_{<n}} \gamma \cF 
  = \bigcup_{\gamma\in\Gamma_n} B(\gamma).
\]
These facts can be found in~\cite[Section 4.1]{GvdP}.

For $n=n(\Gamma)$ as in the proposition above, write
\[
  \rho = \max\left\{\frac{r(\gamma_i\gamma)}{r(\gamma)} ~|~
  i \in \{\pm 1,\ldots, \pm g\}, \gamma \in \Gamma_n^{\neq -i}\right\}.
\]
Note that $\rho<1$, since we are considering finitely many terms,
each of which is less than $1$ because
$B(\gamma_i\gamma)\subsetneq B(\gamma)$. Now set the constant $C$, depending on $\Gamma$ and the generators, as 
\[
  C =\frac{ \max_{\gamma\in \Gamma_n}  r(\gamma)}{\rho^n}.
\]
We have the following result.
\begin{proposition}
Let $n = n(\Gamma)$ as above. Then for all $\gamma\in \Gamma_{\geq n}$,
\[
  r(\gamma) \leq C \rho^{\length(\gamma)}.
\]
\end{proposition}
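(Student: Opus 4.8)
The plan is to prove the bound $r(\gamma) \le C\rho^{\length(\gamma)}$ for all $\gamma \in \Gamma_{\ge n}$ by induction on $\length(\gamma)$, where the multiplicative gap $\rho < 1$ per letter is precisely what the definition of $\rho$ was engineered to supply. First I would verify the base cases, namely all $\gamma \in \Gamma_n$ of length exactly $n$. For these the definition of $C$ gives directly
\[
  r(\gamma) \le \max_{\gamma' \in \Gamma_n} r(\gamma') = C\rho^n = C\rho^{\length(\gamma)},
\]
so the inequality holds at the bottom of the induction.

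For the inductive step, let $\gamma \in \Gamma_{\ge n}$ with $\length(\gamma) = m+1 > n$, and write $\gamma = \gamma_i \tilde\gamma$ with no cancellation, so that $\length(\tilde\gamma) = m \ge n$ and $h(\tilde\gamma) = \gamma_j$ for some $j \ne -i$; in other words $\tilde\gamma \in \Gamma_m^{\ne -i}$ in the notation used to define $\rho$. The key observation is that attaching the letter $\gamma_i$ on the left multiplies the radius by a factor bounded by $\rho$: indeed
\[
  \frac{r(\gamma)}{r(\tilde\gamma)} = \frac{r(\gamma_i\tilde\gamma)}{r(\tilde\gamma)} \le \rho
\]
by the very definition of $\rho$ as the maximum of exactly such ratios over all admissible one-letter extensions. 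Combining this with the inductive hypothesis $r(\tilde\gamma) \le C\rho^{\length(\tilde\gamma)} = C\rho^{m}$ yields
\[
  r(\gamma) \le \rho \cdot r(\tilde\gamma) \le \rho \cdot C\rho^{m} = C\rho^{m+1} = C\rho^{\length(\gamma)},
\]
which closes the induction.

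The one point demanding care, and the place I expect the main (though mild) obstacle, is justifying that the per-letter ratio bound genuinely holds for \emph{all} admissible extensions at every length, not merely for the finitely many appearing in the defining maximum of $\rho$. The definition of $\rho$ ranges only over $\gamma \in \Gamma_n^{\ne -i}$, so one must argue that the comparison $r(\gamma_i\gamma)/r(\gamma) \le \rho$ propagates to arbitrary $\gamma \in \Gamma_{\ge n}^{\ne -i}$. This follows from the relation $B(\gamma_1\gamma) = \gamma_1 B(\gamma)$ noted before Proposition~\ref{prop:infty-not-in-balls}, together with Lemma~\ref{lemma:action-balls}: when the balls are proper the Möbius map $\gamma_i$ scales radii by the factor $|\gamma_i'(P)|$ evaluated at a point $P \in B(\gamma)$, and one checks that this local scaling factor is controlled uniformly once $\length(\gamma) \ge n$, because $B(\gamma)$ is then contained in one of the finitely many balls $B(\gamma')$ with $\gamma' \in \Gamma_n$ and the derivative $|\gamma_i'|$ is locally constant on each such ball. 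Establishing this uniformity is the only step that is not purely formal; once it is in hand, the induction above delivers the result immediately.
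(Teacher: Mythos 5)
Your proof is correct, and while it shares the paper's overall skeleton (induction on $\length(\gamma)$, with each added letter contributing a factor $\leq \rho$), the way you justify the crucial per-letter bound is genuinely different from the paper's. The paper writes $\gamma=\gamma_i t g$ with $g\in\Gamma_n$ and runs a \emph{nested} induction on $\length(t)$, transferring ratios via the equality $r(\gamma_i t' g)/r(\gamma_i g)=r(t'g)/r(g)$, which compares the scaling of $\gamma_i$ on the two non-nested balls $B(t'g)$ and $B(g)$. You instead compare $B(\tilde\gamma)$ directly with the ball $B(g)$ of its length-$n$ prefix $g$, which \emph{contains} it and has the same head, so that $g\in\Gamma_n^{\neq -i}$ and the ratio $r(\gamma_i g)/r(g)$ is literally one of those in the definition of $\rho$; since by Lemma~\ref{lemma:action-balls} the scaling factor is $|\gamma_i'(P)|$ for any $P\in B$, and $|\gamma_i'|$ is constant on all of $B(g)$ (one can take the same $P\in B(\tilde\gamma)\subseteq B(g)$ for both balls), you get $r(\gamma_i\tilde\gamma)/r(\tilde\gamma)=r(\gamma_i g)/r(g)\leq\rho$ in one step. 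This is cleaner: it compares only nested balls, on which constancy of the derivative norm is automatic, whereas the paper's transfer equality between $B(t'g)$ and $B(g)$ is not obviously justified (those balls sit inside possibly different $B_j^+$, where $|\gamma_i'|$ need not take the same value). Two small points you should nail down to make your argument fully rigorous: (i) the constancy of $|\gamma_i'|$ on $B(g)^+$ follows because $\gamma_i^{-1}(\infty)\notin B(g)^+$, which is supplied by Proposition~\ref{prop:infty-not-in-balls} applied to $\gamma_i g\in\Gamma_{n+1}$ (your phrase ``locally constant'' should be ``constant on the whole ball'', and ``controlled uniformly'' is really an exact equality of ratios, not just a bound); and (ii) you should note explicitly that the length-$n$ prefix of $\tilde\gamma$ has head $h(\tilde\gamma)\neq\gamma_{-i}$, so it is an admissible competitor in the maximum defining $\rho$. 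With those made explicit, your induction closes with no further work.
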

\begin{proof}
  By induction on the length $m\geq n$ of $\gamma$. For $m=n$, the result holds
  trivially by definition of $C$. Now, in general consider $\gamma_i t g$,
  a proper word decomposition, where $g\in \Gamma_n$. We will show that
  \[
    \frac{r(\gamma_i t g)}{r(\gamma_i g)} = \frac{r(t g)}{r(g)} \leq  \rho^{\length(t)}.
  \]
  Therefore by induction hypothesis we have
  \[
    r(\gamma_itg) \leq  r(\gamma_ig)\rho^{\length(t)} \leq
    C \rho^{\length(\gamma_ig)+\length(t)}
    \]
    as desired. To show that
    \[
  \frac{r(\gamma_i t g)}{r(\gamma_i g)} = \frac{r(t g)}{r(g)}\leq  \rho^{\length(t)},
    \]
    observe that the equality holds by Lemma~\ref{lemma:action-balls}. We show the
    inequality by induction on $\length(t)$, the base case being trivial by definition. In general, write
    $t=\gamma_i t'$. Then
    \[
      \frac{r(\gamma_i t' g)}{r(g)} = \frac{r(\gamma_i t' g)}{r(\gamma_i g)} \frac{r(\gamma_i g)}{r(g)} \leq \rho^{\length(t')} \cdot \rho
    \]
    by induction hypothesis and the definition of $\rho$.
\end{proof}

Set $\delta(\infty)=1$ and, for any point $z \in \cF^+ \setminus\{\infty\}$, define $\delta(z)$ to be
\[
  \delta(z) = \max \{|z - c(\gamma)|^{-1}  ~|~ \gamma\in \Gamma_n \} > 0,
\]
where $c(\gamma)$ denotes a chosen center of $B(\gamma)$. Also, if $D\in \Div^0(\cF^+)$,
define
\[
  \delta(D) = \max \{\delta(z) ~|~ z \in |D|\} >0.
\]
Finally, let
\[
R_D = \max \{|z-w| ~|~ z,w \in |D|\setminus \{\infty\}\} 
\]
that we call the diameter of $D$. 

\begin{proposition}
  Let $D, E\in\Div^0(\cF^+)$, and let $0<\eps<1$ be given.
  Define
  \[
    \nu=\nu(\eps) = \max\left\{2, n(\Gamma), \frac{\log \eps - \log C - 2 \log \delta(D) - R_D}{\log \rho}\right\}.
  \]
  Then for all $\gamma\in\Gamma_{> \nu}$ we have
  \[
    |(D,\gamma E) -1| < \eps
  \]
\end{proposition}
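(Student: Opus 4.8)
The plan is to reduce the estimate to a bound on a single cross-ratio and then reassemble the product using the ultrametric inequality. First I would write $D$ and $E$ as finite sums of elementary degree-zero divisors, $D = \sum_k ((P_k) - (P'_k))$ and $E = \sum_l ((Q_l) - (Q'_l))$, so that bi-additivity of the pairing gives
\[
(D, \gamma E) = \prod_{k,l}(P_k, P'_k; \gamma Q_l, \gamma Q'_l).
\]
Because the ultrametric inequality yields $|uv - 1| \le \max(|u-1|, |v-1|)$ whenever $|u-1|, |v-1| < 1$ (note that then $|u|=|v|=1$), it suffices to bound each individual factor $|(P_k, P'_k; \gamma Q_l, \gamma Q'_l) - 1|$ by $\eps$; the claimed bound for $(D,\gamma E)$ then follows for the whole finite product at once, and in particular is insensitive to $E$.

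For a single factor the key is the exact identity
\[
(P, P'; \gamma Q, \gamma Q') - 1 = \frac{(\gamma Q - \gamma Q')(P - P')}{(P - \gamma Q')(P' - \gamma Q)},
\]
obtained by clearing denominators in the definition of the cross-ratio. I would then estimate the two factors of the numerator and the two of the denominator separately. For the numerator, $P, P' \in |D|$ give $|P - P'| \le R_D$, while $\gamma Q, \gamma Q' \in \gamma\cF^+ \subseteq B(\gamma)^+$, a closed ball of radius $r(\gamma)$, so $|\gamma Q - \gamma Q'| \le r(\gamma)$; invoking the previous proposition, $r(\gamma) \le C\rho^{\length(\gamma)}$.

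The delicate step is the lower bound on the denominator, and this is where the factor $\delta(D)^2$ appears: I claim $|P - \gamma Q'| \geq \delta(D)^{-1}$ and likewise $|P' - \gamma Q| \geq \delta(D)^{-1}$. To see this, let $\gamma_0 \in \Gamma_n$ be the length-$n$ prefix of $\gamma$ (with $n = n(\Gamma)$), so $B(\gamma) \subseteq B(\gamma_0)$ and in particular $\gamma Q'$ lies in $B(\gamma_0)$, within $r(\gamma_0)$ of its center $c(\gamma_0)$. Since $P \in \cF^+$ lies outside $B(\gamma_0)$ we get $|P - c(\gamma_0)| \geq r(\gamma_0) > |\gamma Q' - c(\gamma_0)|$, and the ultrametric then forces $|P - \gamma Q'| = |P - c(\gamma_0)| \geq \delta(P)^{-1} \geq \delta(D)^{-1}$, the middle inequality being exactly the definition of $\delta$ as a maximum over centers of length-$n$ balls (the cases where a point of $D$ equals $\infty$ being covered by the convention $\delta(\infty)=1$). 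Combining the four estimates yields
\[
|(P, P'; \gamma Q, \gamma Q') - 1| \le C\rho^{\length(\gamma)}\, R_D\, \delta(D)^2,
\]
and imposing that the right-hand side be $<\eps$ and solving for $\length(\gamma)$ (recalling $\log\rho < 0$) produces the threshold defining $\nu$; together with the product rule of the first paragraph this finishes the proof.

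I expect the main obstacle to be precisely this denominator bound: one must check that the center $c(\gamma)$ of the tiny ball $B(\gamma)$ attached to a long word $\gamma$ can be replaced by the center $c(\gamma_0)$ of its length-$n$ prefix at no cost, which is what lets the estimate be expressed through the fixed quantity $\delta(D)$ that only sees $\Gamma_n$. A secondary point to verify along the way is the open-versus-closed ball bookkeeping, namely $\gamma\cF^+ \subseteq B(\gamma)^+ \subseteq B(\gamma_0)$ and $P \notin B(\gamma_0)$, all of which follow from the defining properties of the good fundamental domain recorded above.
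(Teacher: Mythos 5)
Your proposal is correct and follows essentially the same route as the paper: reduce to elementary divisors via the ultrametric inequality, use the exact identity $(z,w;a,b)-1=\frac{(z-w)(a-b)}{(z-b)(w-a)}$, bound the numerator by $R_D\, r(\gamma)\le R_D\, C\rho^{\length(\gamma)}$, and bound the denominator from below by $\delta(D)^{-2}$. Your explicit reduction to the length-$n(\Gamma)$ prefix $\gamma_0$ to justify $|P-\gamma Q'|=|P-c(\gamma_0)|\ge\delta(D)^{-1}$ is in fact slightly more careful than the paper's wording (which writes $c(\gamma)$ directly, even though $\delta$ only ranges over centers of balls indexed by $\Gamma_{n(\Gamma)}$), but it is the same argument.
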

\begin{proof}
We will show  that \[    |(D,\gamma E) - 1| \leq C R_D \delta(D)^2 \rho^{\length(\gamma)}\le \eps\]
for any $D, E\in\Div^0(\cF^+)$ and $\gamma\in\Gamma_{> \nu}$, for $\nu\ge n:=n(\Gamma)$, $\nu\ge 2$, and the result will follow.

  Firstly, we show the result when $E$ and $D$ are elementary divisors. By the symmetry properties of the cross-ratio, we have
  \[
    |(z,w;\gamma a, \gamma b) - 1| = |(z,\gamma a, w, \gamma b)| =
    \frac{|z-w|}{|z-\gamma b|}\frac{|\gamma a - \gamma b|}{|w - \gamma a|}.
  \]
  
  If $z$ or $w$ is $\infty$, then by definition of the cross-ratio we need instead
  to bound
  \[
    \frac{|\gamma a - \gamma b|}{|w-\gamma a|} \text{ or }
    \frac{|\gamma a - \gamma b|}{|z-\gamma b|}
  \]
  respectively. We will separately bound the quantities $|\gamma a - \gamma b|$
  and $|w-\gamma a|$. Note also that, since both $\gamma a$ and $\gamma b$ belong to $B(\gamma)$, we have
  \[
    |\gamma a - \gamma b| < r(\gamma).
  \]
  
  On the other hand, since $w \in \cF^+$, it is not in $B(\gamma)^+$
  for any $\gamma \in \Gamma_{\geq 2}$. Therefore, if $\gamma \in \Gamma_{\geq n}$,
  as long as $n\geq 2$, we have
  \[
    |w - \gamma a| = |w - c(\gamma)| \geq  \delta(w)^{-1}.
  \]
  We obtain
  \[
    |(z, w;\gamma a, \gamma b) -1| \leq r(\gamma) |z - w| \delta(z)\delta(w),
  \]
  where the term $|z-w|$ must be omitted if $z=\infty$ or $z=\infty$.

  Using the bound on $r(\gamma)$, we have, for all $\gamma\in \Gamma_{\geq n}$,
  \[
    |(z,w;\gamma a,\gamma b) - 1| \leq C |z-w| \delta(z)\delta(w)\rho^{\length(\gamma)},
  \]
and the result follows in this first case. 

The general case is done decomposing the divisors $D$ and $E$ as sums of degree 0 elementary divisors, and using that, 
for any $\alpha$, $\beta \in K$, if $|\alpha-1|<\eps$ and $|\beta-1|<\eps$, then $|\alpha\beta-1|<\eps$. 
\end{proof}
\begin{corollary}
  \label{cor:approximation}
  Let $D,E\in\Div^0(\cF^+)$ and let $0<\eps<1$ be given.
  Then for all $n\geq \nu(\eps)$ we have
  \[
    |(D,E)_\Gamma/(D,E)_{\leq n} - 1|< \eps.
  \]
\end{corollary}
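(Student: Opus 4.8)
The plan is to express the quotient $(D,E)_\Gamma/(D,E)_{\leq n}$ as a convergent infinite product over the group elements of length exceeding $n$, and then to bound this product factor by factor using the preceding proposition together with the ultrametric inequality. Concretely, since $(D,E)_{\leq m} = \prod_{\gamma\in\Gamma_{\leq m}}(D,\gamma E)$ and $(D,E)_\Gamma = \lim_{m\to\infty}(D,E)_{\leq m}$, and since each partial product is a nonzero finite product, dividing gives
\[
\frac{(D,E)_\Gamma}{(D,E)_{\leq n}} = \prod_{\gamma\in\Gamma_{>n}}(D,\gamma E),
\]
the tail product converging because the full product does (this convergence is the content cited from~\cite{GvdP}).

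The key tool I would isolate is an elementary ultrametric estimate for products: if $(x_\gamma)_\gamma$ is a family in $K^\times$ with $|x_\gamma-1|<1$ for every $\gamma$ and $|x_\gamma-1|\to 0$ along the family, so that $\prod_\gamma x_\gamma$ converges, then
\[
\Bigl|\prod_\gamma x_\gamma - 1\Bigr| \leq \sup_\gamma |x_\gamma-1|.
\]
This reduces by induction to the two--factor case: writing $x_\gamma = 1+a_\gamma$, one has $(1+a)(1+b)-1 = a+b+ab$, and $|a|,|b|<1$ force $|a+b+ab|\leq\max(|a|,|b|)$; the passage to the infinite product is justified by continuity of the absolute value. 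I would then apply this with $x_\gamma = (D,\gamma E)$.

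To see that the hypotheses hold and to obtain a strict bound, fix $n\geq\nu(\eps)$. Every $\gamma\in\Gamma_{>n}$ lies in $\Gamma_{>\nu}$, so the preceding proposition applies; moreover its proof yields the uniform estimate $|(D,\gamma E)-1|\leq C R_D\,\delta(D)^2\rho^{\length(\gamma)}$. Since $\rho<1$ this right--hand side is maximal at the smallest admissible length, whence
\[
\sup_{\gamma\in\Gamma_{>n}}|(D,\gamma E)-1| \leq C R_D\,\delta(D)^2\rho^{\,n+1} < \eps.
\]
Feeding this into the product estimate gives exactly
\[
\Bigl|\frac{(D,E)_\Gamma}{(D,E)_{\leq n}}-1\Bigr| = \Bigl|\prod_{\gamma\in\Gamma_{>n}}(D,\gamma E)-1\Bigr| \leq \sup_{\gamma\in\Gamma_{>n}}|(D,\gamma E)-1| < \eps.
\]

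I do not expect a genuine obstacle here: the analytic difficulty (convergence of the defining product and the decay rate of the factors) has already been settled in the preceding proposition, and what remains is bookkeeping. The two points to handle with care are that each factor satisfies $|(D,\gamma E)-1|<1$ --- which is what legitimizes the ultrametric product bound and is guaranteed by $n\geq\nu\geq\max\{2,n(\Gamma)\}$ together with $\eps<1$ --- and that the geometric decay $\rho^{\length(\gamma)}$ keeps the supremum over $\Gamma_{>n}$ \emph{strictly} below $\eps$ rather than merely at most $\eps$.
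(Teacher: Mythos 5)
Your proof is correct and fills in exactly the argument the paper leaves implicit (the corollary is stated without proof as an immediate consequence of the preceding proposition): write the quotient as the tail product over $\Gamma_{>n}$, apply the proposition's factor-wise bound, and conclude with the ultrametric estimate $\bigl|\prod_\gamma x_\gamma - 1\bigr|\leq\sup_\gamma|x_\gamma-1|$. Your attention to the strictness of the final inequality and to the hypothesis $|x_\gamma-1|<1$ is appropriate and consistent with how $\nu(\eps)$ is defined.
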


\section{The main algorithm}
\label{sec:algorithm}
In this section, we describe the proposed algorithm to compute arbitrary
approximations to $(D,E)_\Gamma$,
for divisors $D, E \in \Div^0(\Omega)$.
We will assume that a fundamental domain $\cF$
has been computed for $\Gamma$, and that the corresponding balls have center in $K$ and radius
in $|K^\times|$.

\begin{lemma}
  The map $\Div^0(\cF^+) \to \Div^0(\Omega)_\Gamma$ is surjective.
\end{lemma}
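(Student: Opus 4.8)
The plan is to show that every class in $\Div^0(\Omega)_\Gamma = H_0(\Gamma,\Div^0(\Omega))$ admits a representative supported on $\cF^+$. First I would fix a base point $z_0\in\cF^+$ and observe that $\Div^0(\Omega)$ is generated as an abelian group by the elementary divisors $[P]-[z_0]$ with $P\in\Omega$ (any degree-zero divisor $\sum n_j[P_j]$ equals $\sum n_j([P_j]-[z_0])$ since $\sum n_j=0$). Hence $\Div^0(\Omega)_\Gamma$ is generated by the classes of these divisors, and it suffices to show each such class lies in the image of $\Div^0(\cF^+)$.

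Next, using the third bullet of the Gerritzen--Van der Put theorem, namely $\bigcup_{\gamma}\gamma\cF^+=\Omega$, I would write $P=\gamma Q$ with $Q\in\cF^+$ and $\gamma\in\Gamma$. As an identity in $\Div^0(\Omega)$ one has
\[
[P]-[z_0] \;=\; \gamma\bigl([Q]-[z_0]\bigr) \;+\; \bigl([\gamma z_0]-[z_0]\bigr).
\]
Passing to $H_0(\Gamma,\Div^0(\Omega))$, where $\Gamma$ acts trivially on coinvariant degree-zero divisors, the class of $[P]-[z_0]$ equals the class of $([Q]-[z_0]) + \iota(\gamma)$. The first summand $[Q]-[z_0]$ is supported on $\cF^+$, so the whole statement reduces to showing that $\iota(\gamma)$ lies in the image of $\Div^0(\cF^+)$ for every $\gamma\in\Gamma$. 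Since $\iota\colon\Gamma\to H_0(\Gamma,\Div^0(\Omega))$ is a homomorphism factoring through $\Gamma^{\ab}$ (by the same bi-additivity argument used in the excerpt), and $\Gamma$ is generated by $\gamma_1,\dots,\gamma_g$, it is enough to realize each $\iota(\gamma_i)$ by a divisor on $\cF^+$.

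This last point is the crux of the argument, and the step I expect to be the main obstacle, since it is precisely where one must exploit the boundary identifications of a \emph{good} fundamental domain rather than only the tiling property. I would choose a point $Q_i\in\partial B_{-i}$; such a point exists in $\Omega$ because the radii lie in $|K^\times|$, and it lies in $\cF^+$ because the closed balls $B_j^+$ are pairwise disjoint and $\cF^+=\PP^1\setminus\bigcup_j B_j$, so $\partial B_{-i}$ meets no open ball $B_j$. Now the defining relation $\gamma_i(\PP^1\setminus B_{-i})=B_i^+$ identifies the closed ball $\PP^1\setminus B_{-i}$ with $B_i^+$ via a Möbius homeomorphism, which carries boundary to boundary; hence $\gamma_i Q_i\in\partial B_i\subseteq\cF^+$ as well. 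Therefore $[\gamma_i Q_i]-[Q_i]\in\Div^0(\cF^+)$, and its class in $H_0$ is exactly $\iota(\gamma_i)$. Combining the three steps, every generator $[P]-[z_0]$ becomes, in $\Div^0(\Omega)_\Gamma$, a sum of classes of divisors supported on $\cF^+$, which establishes surjectivity. The delicate ingredient is thus the realization of $\iota(\gamma_i)$: it relies essentially on the orbit relation $\gamma_i\colon\partial B_{-i}\to\partial B_i$ built into the good fundamental domain, which is what allows a nontrivial element of the $\Gamma^{\ab}$-part of $H_0$ to be witnessed entirely inside $\cF^+$.
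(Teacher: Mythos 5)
Your proof is correct and is essentially the paper's argument: both hinge on choosing boundary points $Q_i\in\partial B_{-i}\subseteq\cF^+$ with $\gamma_i Q_i\in\partial B_i\subseteq\cF^+$ and then telescoping along the word representing $\gamma$. The only difference is cosmetic: you package the telescoping through the homomorphism $\iota\colon\Gamma\to H_0(\Gamma,\Div^0(\Omega))$ and reduce to generators via $\Gamma^{\ab}$, whereas the paper runs an explicit induction on $\length(\alpha)$, peeling off one generator at a time from the divisor $(a)-(b)$.
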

\begin{proof}
  It is enough to show that every divisor of the form $(a)-(b)$ is $\Gamma$-equivalent
  to a divisor supported on $\cF^+$. For each $i=1,\ldots,g$, fix a point $z_i\in\cF^+$
  such that $z_i' = \gamma_i z_i$ is also in $\cF^+$. This can be done by taking $z_i$
  any point in $\partial B_{-i}$, so that $\gamma_i z_i \in \partial B_i$.
  
  Translating by $\Gamma$, we may and do assume that
  $b \in \cF^+$. Let $a_0\in \cF^+$ be $\Gamma$-equivalent to $a$, say
  $a = \alpha a_0$ for some $\alpha\in\Gamma$. We do induction on $\length(\alpha)\geq 0$.
  If $\alpha=1$ we are done, otherwise
  write $\alpha=\gamma_i \tilde \alpha$, with $\gamma_i$ one of the generators of $\Gamma$.
  If we set $\tilde a = \tilde \alpha a_0$, we can decompose
  \begin{align*}
    (a)-(b) &= (a) - (z_i') + (z_i') - (b)\\
    &= (\gamma_i \tilde\alpha a_0) - (\gamma_i z_i) + (z_i') - (b)\\
    &= \gamma_i \left((\tilde a) - (z_i)\right) + (z_i') - (b).
  \end{align*}
  The first term is $\Gamma$-equivalent to $(\tilde a)-(z_i)$ which is,
  by induction, in $\Div^0(\cF^+)$. The second term is also in $\Div^0(\cF^+)$
  by our choice of $z_i$ and $z_i'$, thus proving the claim.
\end{proof}

Note that the proof of the previous lemma is constructive, so we may and do assume
that both divisors $D$ and $E$ are supported on $\cF^+$.
As we have seen in Corollary~\ref{cor:approximation}, in order to approximate
$(D,E)_\Gamma$ it is enough to compute the finite product
\[
  (D,E)_{\leq \nu} = \prod_{k=0}^{\nu} (D,E)_k
\]
for an appropriate computable $\nu=\nu(\eps)$.

In the rest of the section we describe how to iteratively compute $(D,E)_k$.

\subsection{An iterative algorithm}

The basis for the algorithm rests on Lemma~\ref{lem:decomposition}. Note that
for $k\geq 1$ we have
\[
  (D,E)_{\Gamma_k} = \prod_{i=\pm 1}^{\pm g} (D,E)_{\Gamma_k^{(i)}}.
\]
Moreover, Lemma~\ref{lem:decomposition} implies that for $k\geq 1$ we have
\begin{align*}
  (D,E)_{\Gamma_{k+1}^{(i)}} &= \prod_{\gamma\in \Gamma_{k+1}^{(i)}} (D,\gamma E)
    =\prod_{j\neq i} \prod_{\gamma \in \gamma_i \Gamma_k^{(j)}} (D, \gamma E)
    =\prod_{j\neq i} \prod_{\gamma \in \Gamma_k^{(j)}} (D, \gamma_i\gamma E)\\
  &=\prod_{j\neq i} \prod_{\gamma \in \Gamma_k^{(j)}} (\gamma_i^{-1}D, \gamma E)
  =\prod_{j\neq i} (\gamma_i^{-1}D, E)_{\Gamma_k^{(j)}} 
\end{align*}

Suppose that we have computed a $2g$-tuple of rational functions
$\Phi_k = (\phi_k^{(i)})_i\in (K(t)^\times)^{2g}$ with 
\[
  \div \phi_k^{(i)} = \sum_{\gamma \in \Gamma_k^{i}} \gamma E.
\]
Then the tuple $\Phi_{k+1} =  (\phi_{k+1}^{(i)})_i$ could be computed
by setting
\[
  \phi_{k+1}^{(i)} = \prod_{j\neq -i} \gamma_i \phi_{k}^{(j)},
\]
where the action of $\PGL_2(K)$ on $K(t)^\times$ is
\[
  (\gamma \phi)(t) = \phi(\gamma^{-1} t).
\]

The iterative algorithm that these observations would lead to
has a  serious problem. Namely, that the rational
functions $\phi_{k}^{(i)}(t)$ have degree that grows exponentially with $k$.
This results in an algorithm which is exponential both in the time and in
space. In the next subsection we describe how to turn it into a polynomial
time algorithm.

\subsection{Affinoid domains and computation}

We consider the groups of units in the affinoid algebras $\cO(B_i^c)^\times$.
The group $K(B_i^c)^\times$ of
rational functions with zeros and poles contained in $B_i$ embeds
naturally in $\cO(B_i^c)^\times$.

\begin{lemma}
  For each $i, j$ with $j\neq -i$ the map $\phi(t) \mapsto \phi(\gamma^{-1} t)$
  gives a map
  \[
    f_{i,j} \colon \cO(B_j^c) \to \cO(B_i^c),
    \quad \phi \mapsto \gamma_i \phi
  \]
  which fits into a commutative diagram
  \[
    \xymatrix{ 
      \cO(B_j^c)^\times \ar[r]^{f_{i,j}} & \cO(B_i^c)^\times \\
      K(B_j^c)^\times \ar[u]\ar[r]^{\phi \mapsto \gamma_i \phi} &
        K(B_i^c)^\times\ar[u]
    }
  \]
\end{lemma}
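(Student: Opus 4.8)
The plan is to reduce the whole statement to one geometric containment of balls and then appeal to functoriality of pullback along Möbius transformations. First I would unwind the notation. Since the fundamental-domain balls $B_i$ are proper (their centers lie in $K$), each complement $B_i^c=\PP^1\setminus B_i$ is a closed ball in the sense of Section~\ref{sec:basics}, hence an affinoid with affinoid algebra $\cO(B_i^c)$. The map under consideration is pullback along $\gamma_i^{-1}$, i.e. $f_{i,j}(\phi)=\phi\circ\gamma_i^{-1}$ so that $(\gamma_i\phi)(t)=\phi(\gamma_i^{-1}t)$; for it to land in $\cO(B_i^c)$ it is enough to know that $\gamma_i^{-1}(B_i^c)\subseteq B_j^c$.

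The key computation is therefore to identify $\gamma_i^{-1}(B_i^c)$. Starting from the defining relation $\gamma_i(\PP^1\setminus B_{-i})=B_i^+$ of a good fundamental domain, I would pass to complements and track the boundary, using that $\gamma_i$ carries $\partial B_{-i}$ onto $\partial B_i$; this yields $\gamma_i^{-1}(B_i)=\PP^1\setminus B_{-i}^+$ and hence $\gamma_i^{-1}(B_i^c)=B_{-i}^+$. The only delicate point here is the bookkeeping of open versus closed balls when complementing, and this is really the main (and essentially only) obstacle in the argument. With this identity in hand, the hypothesis $j\neq -i$ enters exactly once: by the disjointness of the closed balls $B_k^+$ (condition~(1) of the definition), $B_{-i}^+$ and $B_j^+$ are disjoint, so $B_{-i}^+\cap B_j=\emptyset$, that is $B_{-i}^+\subseteq B_j^c$. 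This gives the desired containment $\gamma_i^{-1}(B_i^c)=B_{-i}^+\subseteq B_j^c$.

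The remainder is formal. As $\gamma_i$ restricts to an isomorphism of affinoids from $B_{-i}^+$ onto $B_i^c$, pullback gives an isomorphism $\cO(B_i^c)\cong\cO(B_{-i}^+)$; composing its inverse with the restriction $\cO(B_j^c)\to\cO(B_{-i}^+)$ (available because $B_{-i}^+\subseteq B_j^c$) produces the ring homomorphism $f_{i,j}$ with the stated formula. Being a homomorphism of rings, $f_{i,j}$ sends units to units, which defines the top arrow $\cO(B_j^c)^\times\to\cO(B_i^c)^\times$ of the diagram.

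Finally, to verify commutativity I would record the dual containment $\gamma_i(B_j)\subseteq B_i$, valid for $j\neq -i$ by the same disjointness: from $\gamma_i(\PP^1\setminus B_{-i}^+)=B_i$ and $B_j\subseteq\PP^1\setminus B_{-i}^+$ one obtains $\gamma_i(B_j)\subseteq B_i$. Thus a rational function with zeros and poles contained in $B_j$ is carried by $\phi\mapsto\gamma_i\phi$ to one with zeros and poles contained in $B_i$, which is exactly the well-definedness of the bottom arrow $K(B_j^c)^\times\to K(B_i^c)^\times$. Since both horizontal maps are the same assignment $\phi\mapsto\phi\circ\gamma_i^{-1}$ and the vertical maps are the inclusions of rational functions into analytic functions, the square commutes on the nose.
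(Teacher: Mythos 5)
Your proof is correct and rests on the same key fact as the paper's: the containment $\gamma_i(B_j)\subseteq B_i$ for $j\neq -i$ (equivalently, $\gamma_i^{-1}(B_i^c)\subseteq B_j^c$), which the paper simply recalls and you derive carefully from the good-position axioms. The only cosmetic difference is that you phrase $f_{i,j}$ as pullback of affinoid functions along $\gamma_i^{-1}\colon B_i^c\to B_j^c$, whereas the paper describes $\cO(B_j^c)$ as meromorphic functions with poles in $B_j$ and tracks the polar divisor; these are dual formulations of the same argument.
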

\begin{proof}
  The affinoid ring $\cO(B_j^c)$ consists of rigid meromorphic functions
  on $\PP^1(K)$ having poles inside $B_j$ (see for instance~\cite[Section 2.2]{FvdP}).
  
  Recall that if $j\neq -i$ then $\gamma_i(B_j)\subseteq B_i$.
  Therefore, if $\phi$ is a meromorphic function having its negative part of the
  divisor of zeros and poles $\div^-\phi$ satisfying
  $|\div^- \phi| \subseteq B_j$, then
  \[
    |\div^- \gamma_i \phi| = \gamma_i (|\div^- \phi|) \subseteq  B_i.
  \]
\end{proof}

Denote by $\cR$ the product
\[
  \cR = \prod_{i=\pm 1}^{\pm g} \cO(B_i^c)^\times
\]
and by $\cR_0$ the product
\[
  \cR_0 = \prod_{i=\pm 1}^{\pm g} K(B_i^c)^\times,
\]
which embeds in $\cR$.
The maps $f_{i,j}$ can be put together into an operator $\nabla$ on $\cR$:
\[
  \nabla \colon \cR \to \cR,\quad
  \vec F = (F_i)_i \mapsto (\nabla_i \vec F)_i,
\]
with
\[
\nabla_i \vec F = \prod_{j \neq i} f_{i,j}(F_j).
\]
We write $\nabla_0$ for the analogous map on $\cR_0$.

\begin{proposition}
  We have the following commutative diagram
  \[
    \xymatrix{
      \cR \ar[r]^{\nabla} & \cR\\
      \cR_0\ar[u]\ar[r]^{\nabla_0}&\cR_0\ar[u]
    }
  \]
\end{proposition}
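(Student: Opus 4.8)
The plan is to verify that the outer square commutes by checking it on each coordinate, reducing the claim to the commutativity of the single square in the preceding lemma. The statement asserts that the two composites $\cR_0 \to \cR_0 \to \cR$ and $\cR_0 \to \cR \to \cR$ agree, where the vertical maps are the coordinatewise inclusions $K(B_i^c)^\times \hookrightarrow \cO(B_i^c)^\times$ and the horizontal maps are $\nabla_0$ and $\nabla$. Since both $\nabla$ and $\nabla_0$ are defined coordinate-by-coordinate via $\nabla_i \vec F = \prod_{j\neq i} f_{i,j}(F_j)$, it suffices to fix an index $i$ and compare the $i$-th components of the two composites.

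First I would take an arbitrary element $\vec F = (F_j)_j \in \cR_0$, so that each $F_j \in K(B_j^c)^\times$. Going right-then-up, the $i$-th coordinate is
\[
  \nabla_i(\vec F) = \prod_{j\neq i} f_{i,j}(F_j),
\]
computed inside $\cO(B_i^c)^\times$, where each $f_{i,j}$ is the analytic map $\phi \mapsto \gamma_i\phi$ from the lemma. Going up-then-right, I first regard each $F_j$ as an element of $\cO(B_j^c)^\times$ via the inclusion, then apply $\nabla_{0,i}$, which by definition is the same product $\prod_{j\neq i}(\gamma_i F_j)$ but formed using the rational-function action $\phi(t)\mapsto\phi(\gamma_i^{-1}t)$ on $K(B_i^c)^\times$.

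The key step is precisely the content of the previous lemma: for each pair $(i,j)$ with $j\neq -i$, the restriction of $f_{i,j}$ to the subgroup $K(B_j^c)^\times$ coincides, after inclusion, with the rational action $\phi\mapsto\gamma_i\phi$ landing in $K(B_i^c)^\times$. In other words, the lemma's commutative square says exactly that $f_{i,j}(\iota_j(F_j)) = \iota_i(\gamma_i F_j)$, where $\iota_j, \iota_i$ denote the inclusions. Multiplying these equalities over all $j\neq i$ — using that the inclusions $\iota_i$ are group homomorphisms, so they commute with the finite products — yields
\[
  \prod_{j\neq i} f_{i,j}(\iota_j(F_j)) = \iota_i\Bigl(\prod_{j\neq i}\gamma_i F_j\Bigr) = \iota_i\bigl(\nabla_{0,i}(\vec F)\bigr),
\]
which is the desired coordinatewise equality. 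Since $i$ was arbitrary, the full square commutes.

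I do not expect a genuine obstacle here: the proposition is essentially a formal consequence of the lemma, assembled diagonally over the $2g$ coordinates. The only point that deserves a word of care is the range of the product index, namely that $\nabla_i$ omits $j=i$ whereas the lemma is stated for $j\neq -i$; one should note that the maps $f_{i,j}$ (and hence the rational action) are only guaranteed to be well-defined for $j\neq -i$, and the remark that $\gamma_i(B_j)\subseteq B_i$ underpinning the lemma uses exactly this hypothesis. So the mild subtlety is bookkeeping: confirming that every factor appearing in $\nabla_i$ (i.e.\ every $j\neq i$, which in particular excludes none of the problematic $j=-i$ indices improperly) is one for which the lemma applies. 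This is immediate once one observes $j\neq i$ does not force $j=-i$ to be included — the product over $j\neq i$ ranges over indices for which the lemma's hypothesis $j\neq -i$ may fail only at $j=-i$, and handling that term is the only thing to track.
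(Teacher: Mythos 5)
Your proof is correct and takes the same route as the paper, which simply declares the diagram ``obvious from the definitions''; you have just written out the coordinatewise reduction to the preceding lemma, using that the inclusions $K(B_i^c)^\times \hookrightarrow \cO(B_i^c)^\times$ are group homomorphisms. The index discrepancy you worry about at the end is a typo in the paper's definition of $\nabla_i$: the product there should run over $j\neq -i$ (consistently with Lemma~\ref{lem:decomposition}, with the recurrence $\phi_{k+1}^{(i)}=\prod_{j\neq -i}\gamma_i\phi_k^{(j)}$, and with the domain of definition of $f_{i,j}$), so once that is fixed every factor is covered by the lemma and nothing further needs to be tracked.
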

\begin{proof}
  Obvious from the definitions.
\end{proof}

Recall that on an affinoid algebra $\cO(F)$, the supremum norm is defined as $\| \varphi\| = \sup_{z \in F} |\varphi(z)|$.
The following result is the key we need for the algorithm to work

\begin{theorem}
  For each $j\neq -i$ the map $f_{i,j}\colon \cO(B_j^c) \to \cO(B_i^c)$ satisfies
  \[
    \| f_{i,j}(\varphi) \| \leq \| \varphi \|, \quad \forall \varphi \in \cO(B_j^c).
  \]
\end{theorem}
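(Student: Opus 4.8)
The plan is to unwind the definition of the supremum norm and reduce the inequality to a purely geometric containment of balls that is already at our disposal. By construction $f_{i,j}(\varphi) = \varphi \circ \gamma_i^{-1}$, where both sides are regarded as functions on the affinoid $B_i^c = \PP^1\setminus B_i$ (whose $\mathbb{C}_K$-points are exactly those $z$ with $z\notin B_i$). Hence
\[
  \| f_{i,j}(\varphi) \| = \sup_{z \in B_i^c} |\varphi(\gamma_i^{-1} z)| = \sup_{w \in \gamma_i^{-1}(B_i^c)} |\varphi(w)|,
\]
the last equality holding because $\gamma_i^{-1}$ is a Möbius transformation, hence a bijection of $\PP^1(\mathbb{C}_K)$ carrying the point set of $B_i^c$ onto that of $\gamma_i^{-1}(B_i^c)$.

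First I would establish the set inclusion $\gamma_i^{-1}(B_i^c) \subseteq B_j^c$. Taking complements in $\PP^1$, this is equivalent to $B_j \subseteq \gamma_i^{-1}(B_i)$, i.e. to $\gamma_i(B_j) \subseteq B_i$, which is precisely the containment recalled in the proof of the lemma defining the maps $f_{i,j}$ (it follows from condition (2) of the good fundamental domain together with the disjointness of the closed balls $B_k^+$). Granting this, every $\mathbb{C}_K$-point $w$ of $\gamma_i^{-1}(B_i^c)$ lies in $B_j^c$, so $\varphi$ is defined at $w$ and $|\varphi(w)| \leq \sup_{B_j^c}|\varphi| = \|\varphi\|$. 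Taking the supremum over all such $w$ yields $\| f_{i,j}(\varphi) \| \leq \|\varphi\|$, as desired.

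The only delicate point is the bookkeeping of open versus closed balls when handling $\gamma_i^{-1}(B_i^c)$: since $B_i$ is an open ball, $B_i^c$ is the associated closed ball, and applying $\gamma_i^{-1}$ (via Lemma~\ref{lemma:action-balls} and the relation $\gamma_i(\PP^1\setminus B_{-i}) = B_i^+$) sends it to the closed ball $B_{-i}^+$; one then invokes $B_{-i}^+ \cap B_j^+ = \emptyset$ for $j \neq -i$ to land inside $B_j^c$. I expect this careful tracking of the boundary spheres $\partial B_i$ to be the main (though routine) obstacle. Worth emphasizing is that no appeal to a maximum modulus principle is needed: the inequality drops out of the definition of the supremum norm once the inclusion of point sets is in hand, since restricting a function to a subset of its domain can only decrease the supremum of its absolute value.
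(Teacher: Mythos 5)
Your proposal is correct and follows essentially the same route as the paper: both reduce the norm inequality to the point-set inclusion $\gamma_i^{-1}(B_i^c) \subseteq B_j^c$ and then just restrict the supremum to a subset of the domain. The only cosmetic difference is in verifying that inclusion --- the paper sends $x \in B_i^c$ to $\gamma_i^{-1}x \in B_{-i}^+$, disjoint from $B_j^+$, whereas you complement the containment $\gamma_i(B_j) \subseteq B_i$; these are equivalent consequences of the good-position axioms, and your bookkeeping of open versus closed balls is if anything slightly more careful than the paper's.
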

\begin{proof}
  Let $\varphi \in \cO(B_j^c)$. We need to show that
\[
  |\varphi(\gamma_i^{-1} x)| \leq \| \varphi \|,\quad \forall x \in B_i^c.
\]
Now, if $x \in B_i^c$,
then it follows that $\gamma_i^{-1}(x) \in B_{-i}$. In particular, since $j\neq -i$ then
$\gamma_i^{-1}x \in B_j^c$.
Therefore,
\[
|\varphi(\gamma_i^{-1} x)| \leq \sup_{z \in B_j^c} |\varphi(z)|=\|\varphi\|,
\]
as wanted.
\end{proof}

We want to work with approximations to $\varphi$, and the following result
says that the action of $f_{i,j}$ can be computed on such approximations.

\begin{corollary}
  Suppose that $\|\varphi\|\leq 1$, and let $\tilde\varphi = \varphi\cdot (1+e)$,
  with $\|e\| \leq \eps$. Then
  \[
  \|f_{i,j} \tilde \varphi - f_{i,j} \varphi\| \leq \eps.
  \]
\end{corollary}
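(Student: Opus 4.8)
The plan is to exploit the fact that $f_{i,j}$ is not merely a continuous linear map but a $K$-algebra homomorphism, so that it interacts well with the multiplicative decomposition $\tilde\varphi = \varphi + \varphi e$. First I would observe that $f_{i,j}(\phi) = \phi(\gamma_i^{-1} t)$ is precomposition with a fixed M\"obius transformation, hence a ring homomorphism: it is additive and multiplicative, and $f_{i,j}(\varphi e) = f_{i,j}(\varphi)\cdot f_{i,j}(e)$.

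With this in hand the difference rewrites cleanly. Since $\tilde\varphi - \varphi = \varphi e$, I would compute
\[
  f_{i,j}\tilde\varphi - f_{i,j}\varphi = f_{i,j}(\tilde\varphi - \varphi) = f_{i,j}(\varphi e) = f_{i,j}(\varphi)\cdot f_{i,j}(e).
\]
It then remains to estimate the supremum norm of this product. Using the submultiplicativity of the supremum norm on the affinoid algebra $\cO(B_i^c)$ together with the norm-decreasing property of $f_{i,j}$ established in the preceding Theorem, I would bound
\[
  \|f_{i,j}\tilde\varphi - f_{i,j}\varphi\| = \|f_{i,j}(\varphi)\cdot f_{i,j}(e)\| \leq \|f_{i,j}(\varphi)\|\,\|f_{i,j}(e)\| \leq \|\varphi\|\,\|e\| \leq 1\cdot\eps = \eps,
\]
where the last step uses the hypotheses $\|\varphi\|\leq 1$ and $\|e\|\leq\eps$.

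There is no genuine obstacle here: the corollary is essentially a formal consequence of the Theorem once the algebraic structure is made explicit. The only two points requiring a word of justification are that $f_{i,j}$ respects multiplication (immediate from its description as composition with $\gamma_i^{-1}$) and that the supremum norm is submultiplicative on an affinoid algebra (a standard property, see e.g.~\cite[Section 2.2]{FvdP}). Both are routine, so the argument is short and self-contained given the results already proved.
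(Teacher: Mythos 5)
Your argument is correct and takes essentially the same route as the paper: both hinge on $\tilde\varphi - \varphi = \varphi e$, the submultiplicativity of the supremum norm, and the norm-decreasing property of $f_{i,j}$ from the preceding theorem. The paper simply bounds $\|\varphi e\|\leq\|\varphi\|\|e\|\leq\eps$ first and then applies the theorem once to the difference, whereas you push the difference through $f_{i,j}$ first (using that it is a ring homomorphism) and apply the theorem to each factor; the two orderings are equivalent.
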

\begin{proof}
  Since $\tilde\varphi - \varphi = \varphi e$, we have
  \[
    \|\tilde \varphi - \varphi\| \leq \|\varphi\| \|e\| \leq \eps.
  \]
  The previous result proves the claim.
\end{proof}

In a similar vein, we have
\begin{corollary}
  Suppose that $\vec F \in \cR$ has $\|\vec F\|\leq 1$, and let $\vec G = \vec F (1 + e)$,
  with $\|e\| \leq \eps$. Then
  \[
    \|\nabla \vec F - \nabla \vec G\| \leq \eps.
  \]
\end{corollary}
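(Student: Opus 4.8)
The plan is to reduce the estimate to a single component $\nabla_i$ and exploit two facts: each $f_{i,j}$ is a \emph{ring homomorphism} (it is precomposition with the Möbius map $\gamma_i^{-1}$), and the preceding theorem gives $\|f_{i,j}(\varphi)\|\leq\|\varphi\|$. I equip $\cR$ with the norm $\|\vec F\|=\max_i\|F_i\|$ and read $\vec G=\vec F(1+e)$ componentwise, so that $G_j=F_j(1+e_j)$ with $\|F_j\|\leq 1$ and $\|e_j\|\leq\eps$ for every $j$.

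First I would fix $i$ and use multiplicativity of $f_{i,j}$ to write $f_{i,j}(G_j)=f_{i,j}(F_j)\bigl(1+f_{i,j}(e_j)\bigr)$. Setting $a_j=f_{i,j}(F_j)$ and $b_j=f_{i,j}(e_j)$, the theorem yields $\|a_j\|\leq\|F_j\|\leq 1$ and $\|b_j\|\leq\|e_j\|\leq\eps$, and
\[
  \nabla_i\vec G=\prod_{j\neq i}a_j(1+b_j),
  \qquad
  \nabla_i\vec F=\prod_{j\neq i}a_j .
\]
Factoring the difference as
\[
  \nabla_i\vec G-\nabla_i\vec F
  =\Bigl(\prod_{j\neq i}a_j\Bigr)\Bigl(\prod_{j\neq i}(1+b_j)-1\Bigr),
\]
the first factor has norm at most $1$ by submultiplicativity of the sup norm. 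For the second I would expand $\prod_{j\neq i}(1+b_j)-1=\sum_{\emptyset\neq S}\prod_{j\in S}b_j$; each nonempty term has norm at most $\eps^{|S|}\leq\eps$ since $\eps<1$, so the ultrametric inequality bounds the whole sum by $\eps$. Multiplying the two estimates gives $\|\nabla_i\vec G-\nabla_i\vec F\|\leq\eps$, and taking the maximum over $i$ proves the claim.

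I do not anticipate a genuine obstacle: the only things to verify are that $f_{i,j}$ is multiplicative (immediate) and that the product expansion is controlled uniformly in the number of factors. The latter is exactly where the non-archimedean hypothesis does the work—each $\|b_j\|\leq\eps<1$, so all the higher-order terms $\prod_{j\in S}b_j$ with $|S|\geq 2$ are dominated by the linear ones and the bound stays at $\eps$ rather than degrading with $g$. This stability under the number of factors is precisely what guarantees that the iterative algorithm does not lose precision as it runs.
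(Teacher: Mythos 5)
Your proof is correct, and it follows the route the paper intends: the paper states this corollary without proof (``in a similar vein''), deferring to the single-function version, and your argument is exactly the natural fleshing-out of that remark. The one point the paper glosses over --- that $\nabla_i$ is a product over $j\neq i$, so one must factor out $\prod_j a_j$ and control $\prod_j(1+b_j)-1$ via the ultrametric inequality --- is precisely what you supply, and you do it correctly.
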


\subsection{Description of the algorithm}

We have now all ingredients to describe the algorithm. Suppose we have already
computed a fundamental domain
\[
  \cF^+=\PP^1\setminus \bigcup_{i=\pm 1}^{\pm g} B_i.
\]
Suppose given divisors $D, E \in \Div^0(\Omega)$, and $\eps > 0$. The goal is to
compute $\theta$ approximating the theta function $(D,E)_\Gamma$: that is,
we want to find $\theta$ such that
\[
  \left|(D,E)_\Gamma / \theta - 1\right| < \eps.
\]

We have seen that finding $\theta$ amounts to computing $(D,E)_{\leq \nu}$
for an explicit $\nu=\nu(\eps)$.

Next, using $\Gamma$-invariance, we can assume that $D$ and $E$ have support on $\cF^+$.
Using additivity, we can even assume that $D=(z_0)-(\infty)$ and $E=(a)- (b)$.

We will compute rational functions $\varphi_0(t), \varphi_1(t)\in K(t)$, and elements
$\vec F_2, \ldots,\vec F_{\nu} \in \cR$ as follows:
\[
\varphi_0(t) = \frac{t - a}{t-b},
\quad \varphi_1(t) = \prod_{i=\pm 1}^{\pm g} \frac{t - \gamma_i^{-1} a}{t - \gamma_i^{-1} b}.
\]
For $\vec F_2 = (F_2^{(i)})_{i}$, set
\[
  F_2^{(i)} = \prod_{j \neq i} \varphi_1(\gamma_i^{-1} z) \in \cO(B_i^c)^\times.
\]
This element is computed approximately and normalized to having norm one.

Choose elements $\varpi_i\in \partial B(p_i,\rho_i)$, and denote by $\tau_i$ the matrix
taking $p_{-i} \mapsto 0$, $\varpi_i \mapsto 1$ and $p_i \mapsto \infty$.
\begin{lemma}
  We have
  \[
    \tau_i(\PP^1 \setminus B_i) = B(0,1^+).
  \]
\end{lemma}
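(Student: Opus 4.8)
The plan is to write $\tau_i$ explicitly and reduce the set equality to a pointwise inequality on absolute values. Since $\tau_i$ is the unique element of $\PGL_2(K)$ sending the three distinct points $p_{-i}, \varpi_i, p_i$ to $0, 1, \infty$, it is given by the cross-ratio
\[
  \tau_i(z) = (z, \varpi_i; p_{-i}, p_i) = \frac{z - p_{-i}}{z - p_i}\cdot\frac{\varpi_i - p_i}{\varpi_i - p_{-i}}.
\]
Because $\tau_i$ is a bijection of $\PP^1$ and $B(0,1^+) = \{w \mid |w| \le 1\}$, the claim $\tau_i(\PP^1 \setminus B_i) = B(0,1^+)$ is equivalent to the assertion that, for every $z$, one has $|\tau_i(z)| \le 1$ if and only if $z \notin B_i = B(p_i, \rho_i)$. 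I would prove this equivalence directly from the formula above. (A more conceptual route, using that $\PGL_2(K)$ sends closed balls to closed balls together with a recentering argument, would identify the image as a proper closed ball through $0$ and $1$ but still requires pinning down its radius via the boundary point $\varpi_i$, so the computational approach is cleaner.)

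First I would record the quantities coming from the good fundamental domain. Write $B_i = B(p_i, \rho_i)$ and $B_{-i} = B(p_{-i}, \rho_{-i})$, and set $d = |p_i - p_{-i}|$. Disjointness of the closed balls $B_i^+$ and $B_{-i}^+$ forces $d > \rho_i$. Since $\varpi_i \in \partial B_i$ we have $|\varpi_i - p_i| = \rho_i$, and since $\varpi_i$ lies in $B_i^+$ while $p_{-i}$ sits at distance $d > \rho_i$ from $p_i$, the ultrametric inequality gives $|\varpi_i - p_{-i}| = d$. Hence the constant factor satisfies $\left|\frac{\varpi_i - p_i}{\varpi_i - p_{-i}}\right| = \rho_i/d < 1$, and
\[
  |\tau_i(z)| = \frac{|z - p_{-i}|}{|z - p_i|}\cdot\frac{\rho_i}{d}.
\]

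The substance of the proof is then an ultrametric case analysis on $|z - p_i|$. If $|z - p_i| < \rho_i$ (that is, $z \in B_i$), then $z \in B_i^+$, so again $|z - p_{-i}| = d$ and $|\tau_i(z)| = \rho_i/|z - p_i| > 1$. If instead $|z - p_i| \ge \rho_i$, I would split according to whether $|z - p_i|$ is less than, equal to, or greater than $d$: in the two extreme cases the ultrametric equality determines $|z - p_{-i}|$ exactly (namely $d$, resp.\ $|z - p_i|$), giving $|\tau_i(z)| = \rho_i/|z - p_i| \le 1$, resp.\ $|\tau_i(z)| = \rho_i/d < 1$; the point $z = \infty$ falls under the last case with $|\tau_i(\infty)| = \rho_i/d < 1$. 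The only delicate point is the borderline case $|z - p_i| = d$, where the triangle inequality fails to be an equality and one only obtains $|z - p_{-i}| \le d$; but this still yields $|\tau_i(z)| \le \rho_i/d < 1$, which is all that is needed. Combining the two ranges shows $z \in B_i \iff |\tau_i(z)| > 1$, which is exactly the desired equivalence, and the lemma follows.
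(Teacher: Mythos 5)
Your proof is correct. Every step checks out: the cross-ratio formula is the right explicit form of $\tau_i$, disjointness of $B_i^+$ and $B_{-i}^+$ does give $d=|p_i-p_{-i}|>\rho_i$, the ultrametric identities $|\varpi_i-p_i|=\rho_i$ and $|\varpi_i-p_{-i}|=d$ hold, and the case analysis on $|z-p_i|$ (including the borderline case $|z-p_i|=d$ and the point $z=\infty$) correctly establishes $z\in B_i \iff |\tau_i(z)|>1$, which together with bijectivity of $\tau_i$ on $\PP^1$ gives the claimed set equality.

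The route is genuinely different from the paper's, though. The paper's proof is a two-line conceptual argument: it invokes the structural fact (Lemma~\ref{lemma:action-balls} and the remark following it) that an element of $\PGL_2(K)$ whose pole is not on $\partial B$ carries $\partial B$ to the boundary of a ball, and then pins down which ball and which side using the three normalized points $p_{-i}\mapsto 0$, $\varpi_i\mapsto 1$, $p_i\mapsto\infty$: since $p_i$ lies inside $B_i$ and goes to $\infty$, the complement of $B_i$ must map to the closed ball through $0$ whose boundary contains $1$. Your computational approach buys self-containedness and makes fully explicit the delicate point that the conceptual argument sweeps under ``boundary maps to boundary,'' namely the sphere $|z-p_i|=d$ on which the ultrametric equality degenerates; the paper's approach buys brevity and reuses machinery already established in Section~2. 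Either is acceptable; yours is longer but leaves nothing implicit.
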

\begin{proof}
  The matrix $\tau_i$ maps $p_{-i}$ to $0$ and $p_{i}$ to $\infty$. Since $\varpi_i$
  is mapped to $1 \in \partial B(0,1)$, it maps the boundary of $B_i$ to that of $B(0,1)$,
  as claimed.
\end{proof}

Using the isomorphisms $\tau_i \colon B_i^c \to B(0,1^+)$,
the function $F_2^{(i)}$ can be represented
as a power series $G_2^{(i)}(t)$, normalized so that its constant term is $1$.

Next, compute iteratively $\vec F_3,\ldots,\vec F_\nu$, by using
\[
  \vec F_{k+1} = \nabla \vec F_k.
\]

In terms of the power series representation, given $G_k^{(i)}$ for all $i$, we compute
\[
  G_{k+1}^{(i)}(t) = \prod_{j\neq i} G_k^{(j)}(\tau_j \gamma_i^{-1} \tau_i^{-1} t).
\]

\begin{lemma}
  For all $t \in B(0,1^+)$ we have
\[
  \tau_j \gamma_i^{-1}\tau_i^{-1} t\in B(0,1^+).
\]
\end{lemma}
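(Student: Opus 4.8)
The plan is to track the point $t$ through the three Möbius maps one at a time, recording at each stage which ball of the fundamental domain it occupies, and then to read off the conclusion from the disjointness of those balls. Throughout I assume $j\neq -i$, which is exactly the range of indices for which the map $f_{i,j}$ (and hence the composite $\tau_j\gamma_i^{-1}\tau_i^{-1}$) is defined; for $j=-i$ the statement in fact fails, since $\tau_{-i}$ sends $B_{-i}$ to a neighbourhood of $\infty$.

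First I would invert the preceding lemma. As $\tau_i(\PP^1\setminus B_i)=B(0,1^+)$ and $\tau_i$ is a bijection of $\PP^1$, we get $\tau_i^{-1}(B(0,1^+))=\PP^1\setminus B_i=B_i^c$, so $\tau_i^{-1}t\in B_i^c$. It matters here that $B_i^c$ is the \emph{closed} ball $\PP^1\setminus B_i$, because $\tau_i^{-1}t$ can land on $\partial B_i$ when $|t|=1$.

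Next I would apply $\gamma_i^{-1}=\gamma_{-i}$. Reading property (2) of a good fundamental domain with $-i$ in place of $i$ gives $\gamma_{-i}(\PP^1\setminus B_i)=B_{-i}^+$, i.e.\ $\gamma_i^{-1}(B_i^c)=B_{-i}^+$ as an equality of closed balls; hence $\gamma_i^{-1}\tau_i^{-1}t\in B_{-i}^+$. This is the step that invokes the defining relation of the Schottky generators, and the one place where the boundary must be carried along correctly, but since the relation is an equality of closed balls nothing is lost.

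Finally I would apply $\tau_j$ and use disjointness. By property (1) the closed balls $B_k^+$ are pairwise disjoint, so $j\neq -i$ forces $B_{-i}^+\cap B_j=\emptyset$, whence $B_{-i}^+\subseteq\PP^1\setminus B_j=B_j^c$. Applying the preceding lemma once more, $\tau_j(B_j^c)=B(0,1^+)$, so $\tau_j(B_{-i}^+)\subseteq B(0,1^+)$. Chaining the three inclusions yields $\tau_j\gamma_i^{-1}\tau_i^{-1}t\in B(0,1^+)$. I do not expect a real obstacle: everything reduces to bookkeeping of images of balls under Möbius maps, with the only delicate point being the consistent treatment of the boundary $\partial B_i$, which is precisely why the target is the closed ball $B(0,1^+)$.
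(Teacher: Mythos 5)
Your proof is correct, and it rests on the same two ingredients as the paper's: the defining relation $\gamma_i(\PP^1\setminus B_{-i})=B_i^+$ of a good fundamental domain and the pairwise disjointness of the closed balls $B_k^+$. The difference is one of direction. You track $t$ forward through $\tau_i^{-1}$, then $\gamma_i^{-1}$, then $\tau_j$, recording the ball it lies in at each stage. The paper argues backwards: it first asserts the inclusion $\tau_i\gamma_i\tau_j^{-1}\bigl(B(0,1^+)^c\bigr)\subseteq B(0,1^+)^c$ (which after unwinding the $\tau$'s amounts to $\gamma_i(B_j)\subseteq B_i$), takes complements to get $B(0,1^+)\subseteq\tau_i\gamma_i\tau_j^{-1}\bigl(B(0,1^+)\bigr)$, and then writes $t=\tau_i\gamma_i\tau_j^{-1}s$ with $|s|\leq 1$. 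The two routes are logically equivalent, but yours is the more self-contained: the paper's opening inclusion is stated without justification, whereas each of your three steps cites a specific property of the fundamental domain, and you handle the boundary $\partial B_i$ explicitly (which is exactly where the closed ball $B(0,1^+)$ rather than the open one is needed). Your side remark that the hypothesis $j\neq -i$ is essential, and that the statement fails for $j=-i$, is also correct and worth recording, since the displayed recursion for $G_{k+1}^{(i)}$ writes the product over $j\neq i$ while the surrounding results use $j\neq -i$.
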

\begin{proof}
  Since $\tau_i\gamma_i\tau_j^{-1} B(0,1^+)^c \subseteq B(0,1^+)^c$, taking complements
  we get $B(0,1^+) \subseteq \tau_i\gamma_i\tau_j^{-1} B(0,1^+)$. Therefore,
  if $t \in B(0,1^+)$ then it can be written as $t=\tau_i\gamma_i\tau_j^{-1} s$ for
  some $s\in B(0,1^+)$. Hence,
  \[
    |\tau_j \gamma_i^{-1}\tau_i^{-1} t| = |s| \leq 1.
  \]
\end{proof}

Finally, in order to evaluate the sought approximation to $((z_0) - (\infty), (a) - (b))_{\Gamma}$,
it is enough to compute
\[
  \varphi_0(z_0)\cdot \varphi_1(z_0) \cdot
  \prod_{k=2}^{\nu} \prod_{i=\pm 1}^{\pm g} F_k^{(i)}(z_0).
\]
In this last equation, note that $F_k^{(i)}(z_0) = G_k^{(i)}(\tau_i z_0)$.

\begin{remark}
  If several evaluations are to be done, it is worth computing the products
  \[
    F_{\leq \nu}^{(i)} = \prod_{k=2}^{\nu} F_k^{(i)} \in \cO(B_i^c),
  \]
  as well as
  \[
    \varphi_{\leq 1} = \varphi_0 \varphi_1 \in K(t)
  \]
  so that evaluating $((z_0) -(\infty), (a) - (b))_{\Gamma}$ just amounts to
  computing
  \[
  \varphi_{\leq 1}(z_0) \prod_{i=\pm 1}^{\pm g} F_{\leq \nu}^{(i)}(z_0).
  \]
\end{remark}

\section{Examples}
\label{sec:examples}
We have implemented all the above algorithms in Sage (\cite{sagemath}). They are
distributed as part of the \texttt{darmonpoints} package, available at
\url{github.com/mmasdeu/darmonpoints}. In this section we provide some examples
that illustrate the use of these algorithms.

\subsection{An example with two sets of generators in good position}
\label{ssec:two-gens-good-pos}
Consider the group $\Gamma=\langle \gamma_1,\gamma_2\rangle\subseteq \PGL_2(\mathbb{Q}_3)$,
where $\gamma_1$ and $\gamma_2$ are defined as
\[
\left(\begin{array}{rr}
4939\cdot 3^{-2} & 3244 \\
6560\cdot 3^{-8} & 2\cdot 3^{-6}
\end{array}\right),\quad
\left(\begin{array}{rr}
241 & 4 \\
80\cdot 3^{-6} & 2\cdot 3^{-6}
\end{array}\right)
\]
The group $\Gamma$ is a Schottky group, and the given generators are in good position.
The corresponding balls are:
\begin{align*}
  B_1 &= B(2\cdot 3^6 + 2\cdot 3^{10} + 2\cdot 3^{12},3^{-12})&
 B_{-1} &= B(2\cdot 3^2 ,3^{-4})\\
 B_2 &= B(2\cdot 3^6,3^{-11})&
 B_{-2} &= \PP^1 \setminus B(0,3^{-1})^+
\end{align*}

However, the set $\{\gamma_1' = \gamma_1^{-1}, \gamma_2'=\gamma_2\gamma_1^{-1}\}$
also generates $\Gamma$, and they form as well a set of generators in good position,
this time with respect to the balls
\begin{align*}
  B_1 &= \PP^1 \setminus B(2\cdot 3^6,p^{-6})^+&
 B_{-1} &= B(2\cdot 3^6 + 2\cdot 3^{10} + 2\cdot 3^{12} + 2\cdot 3^{18},p^{-18})\\
 B_2 &= B(2\cdot 3^6,p^{-11})&
 B_{-2} &= B(2\cdot 3^6 + 2\cdot 3^{10} + 2\cdot 3^{12} + 3^{14},p^{-15})
\end{align*}

This example shows that for a given group $\Gamma$ there might be one more set
of generators in good position,
even up to permutation and up to changing individual generators by their inverses.

\subsection{An example with two different fundamental domains for the same set of
generators}
\label{ssec:two-fundoms-same-gens}

Consider the balls in $\PP^1(\QQ_5)$
\[
  B_1 = B(2,5^{-1}), \quad B_{-1} = B(3,5^{-1}),
\]
and the matrix
\[
  \gamma_1=\begin{pmatrix}73&-144\\24&-47\end{pmatrix},
  \]
which satisfies $\gamma_1(\PP^1\setminus B_{-1}) = B_{1}^+$. Set also
\[
  \gamma_2 = \begin{pmatrix}5^6&0\\0&1\end{pmatrix}.
  \]
Define
\begin{align*}
  B_2 &= B(0,5^{-3}) & B_{-2} &= \PP^1\setminus B(0,5^{3})\\
  C_2 &= B(0,5^{-2}) & C_{-2} &= \PP^1\setminus B(0,5^{4}).
\end{align*}
Then the two sets of balls
\[
  \{ B_{\pm 1}, B_{\pm 2}\},\quad\{B_{\pm 1}, C_{\pm 2}\}
  \]
  are both fundamental domains for the pair $(\gamma_1,\gamma_2)$, as
  is easily checked.

\subsection{Performance of the algorithm}

Consider the group $\Gamma \subset \PGL_2(\mathbb{Q}_3)$ generated by the matrices
\[
  \gamma_1 = \begin{pmatrix}-5&32\\-8&35\end{pmatrix},
  \quad \gamma_2 = \begin{pmatrix}-13&80\\-8&43\end{pmatrix}.
\]
As verified in~\cite[Example 2.3]{MR}, these are in good position. A good fundamental domain
is given by
\[
B_1=B(4,1/9), B_{-1} = B(1,1/9), B_2=B(5,1/9), B_{-2} = B(2,1/9).  
\]

In Figure~\ref{fig:oc-vs-naive} we compare the iterative algorithm to the naive
evaluation, and in Figure~\ref{fig:timings-oc} we see how the iterative algorithms
performs as the precision increases below what is possible with the naive approach.

\begin{figure}
\label{fig:oc-vs-naive}
\includegraphics[width=.7\textwidth]{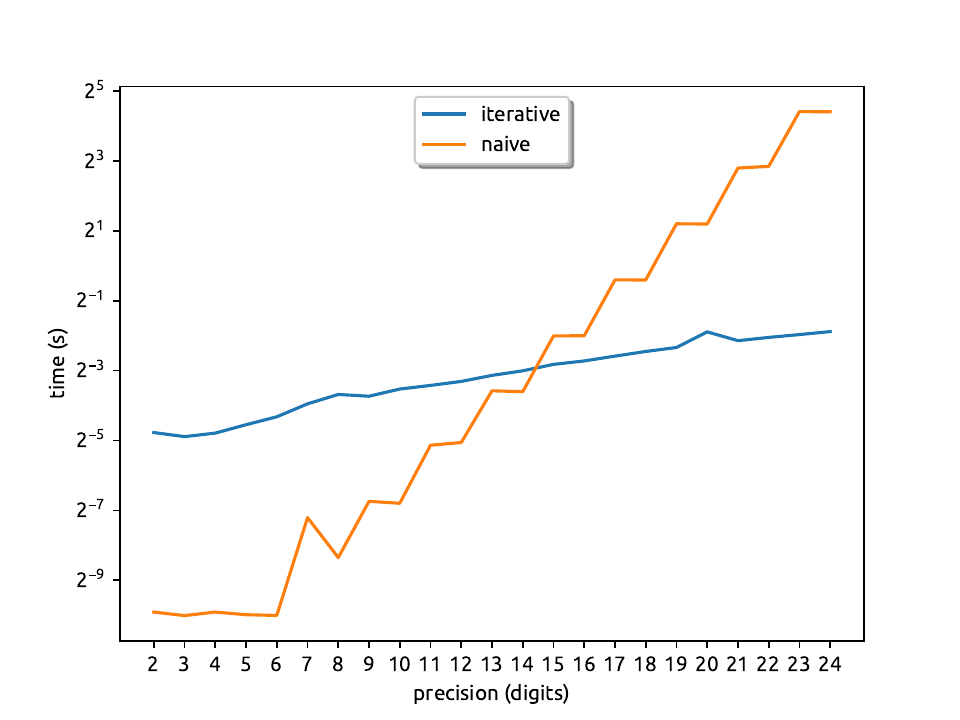}
\caption{Time comparison of naive versus iterative algorithms}
\end{figure}

In Figure~\ref{fig:timings-oc} we show how the iterative algorithm performs
for precisions much larger to what would be feasible with the naive algorithm. In
this plot the time is plotted on a linear scale, and we observe a quadratic dependency
on the precision, which matches the expectation from the design of the algorithm.
\begin{figure}
  \label{fig:timings-oc}
  \includegraphics[width=.7\textwidth]{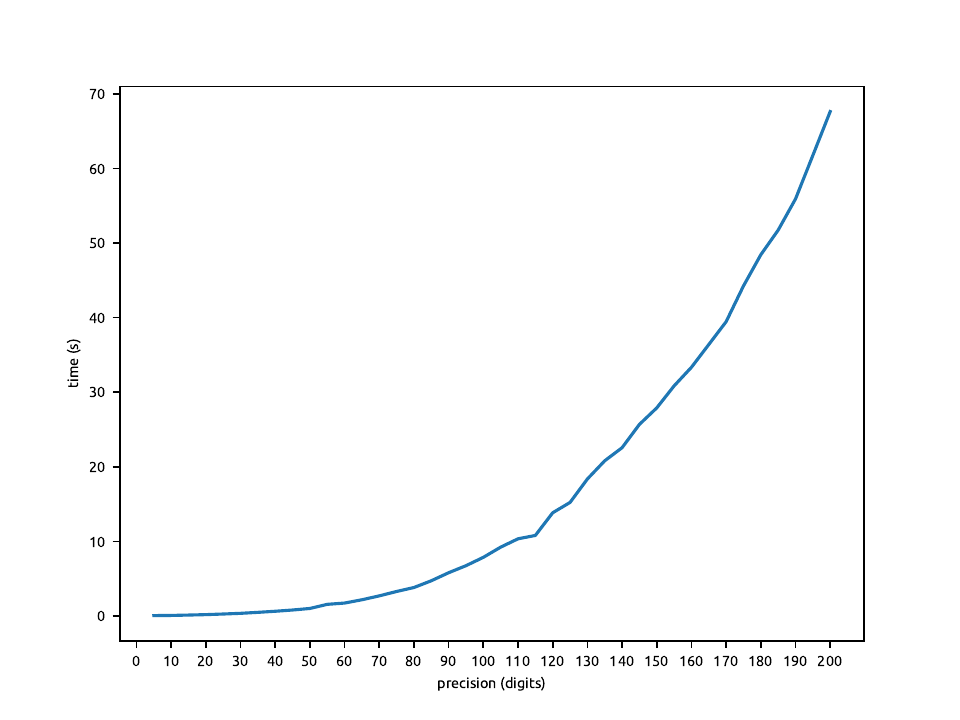}
  \caption{Performance of iterative algorithm for large precision}
  \end{figure}

In the next example, consider the group $\Gamma \subset \PGL_2(K)$, where
$K$ is a quadratic Eisenstein extension of $\mathbb{Q}_3$. In order to compute
the fundamental domain one needs to extend further to a quadratic ramified extension $L/K$. In
Figure~\ref{fig:oc-vs-naive-eisenstein} we that the naive method performs better
until a precision of $15$ $\pi$-adic digits is required, where $\pi$ is the uniformizer of $L$.
\begin{figure}
  \label{fig:oc-vs-naive-eisenstein}
  \includegraphics[width=.7\textwidth]{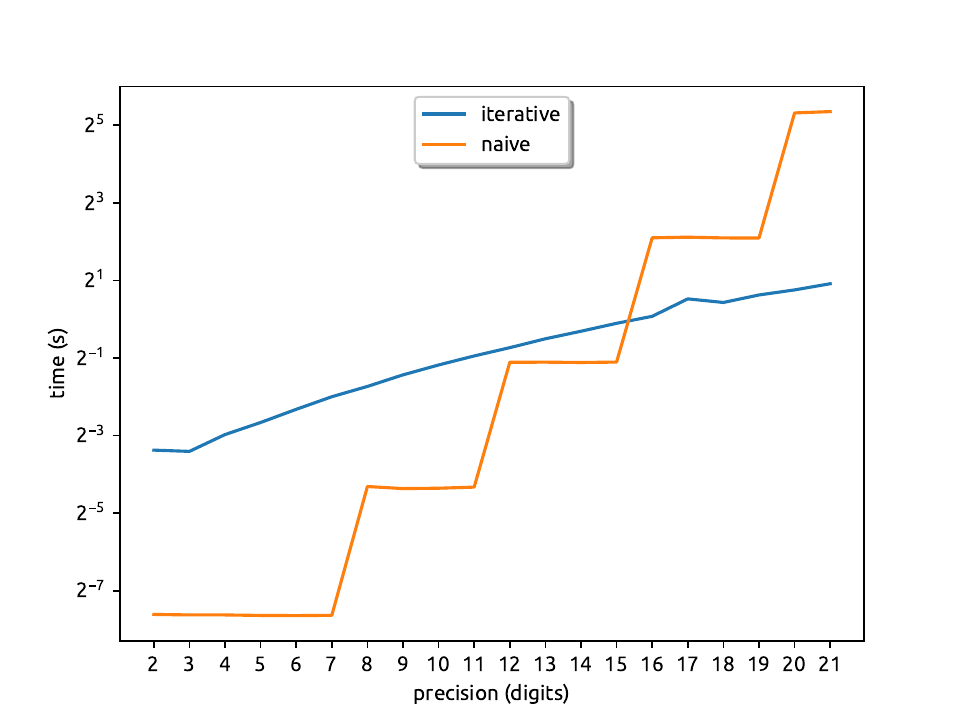}
  \caption{Time comparison for Schottky group defined over an Eisenstein extension}
  \end{figure}

Finally, we study the performance as the number of generators increases. We have
considered Schottky groups generated by $g=2$, $g=3$ and $g=4$ matrices with coefficients
in the $11$-adic field. In Figure~\ref{fig:oc-increasing-gens} we can observe how the
iterative algorithm outperforms the naive algorithm earlier as the number of generators
increases.

\begin{figure}
  \label{fig:oc-increasing-gens}
  \includegraphics[width=.7\textwidth]{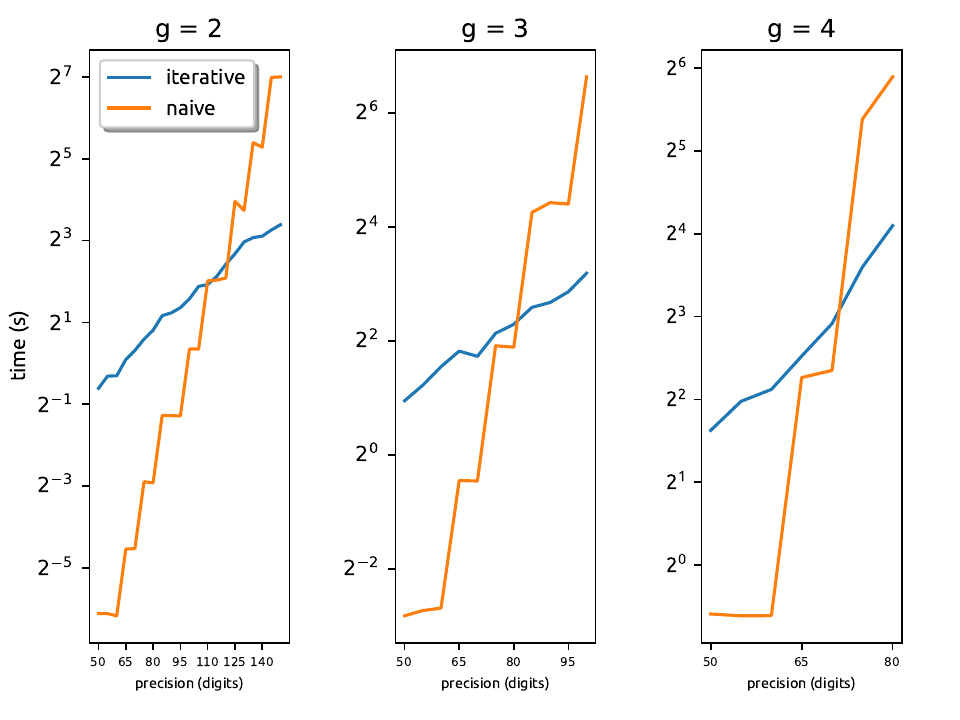}
  \caption{Time comparison for Schottky group on $2$, $3$ and $4$ generators}
  \end{figure}

\printbibliography

\end{document}